\documentclass[11pt]{article}
\usepackage{graphicx}
\usepackage{tikz}
\usepackage{amssymb}
\usepackage{amsmath,amsthm}
\usepackage{verbatim}
\usepackage{amsfonts}
\usepackage[T1]{fontenc}
\usepackage{multicol}
\usepackage{color}
\usepackage{hyperref}
 \hypersetup{
    colorlinks=true,
    citecolor=red,
    linkcolor=blue,
    filecolor=magenta,      
    urlcolor=cyan,
    pdftitle={Overleaf Example},
    pdfpagemode=FullScreen,
}

\textwidth = 6.5 in

\textheight = 9 in
\oddsidemargin = 0.0 in
\evensidemargin = 0.0 in
\topmargin = 0.0 in
\headheight = 0.0 in
\headsep = 0.0 in
\parskip = 0.1in
\parindent = 0.3in

\newtheorem{thm}{Theorem}[section]
\newtheorem{prop}{Proposition}[section]
\newtheorem{cor}[thm]{Corollary}

\newtheorem{lem}[thm]{Lemma}
\newtheorem{conj}[thm]{Conjecture}

\newtheorem{rem}[prop]{Remark}
\usepackage{dsfont}
\newcommand\dsone{\mathds{D}}

\newcommand{\no}{\noindent}
\newcommand{\bdot}{\boldsymbol{\cdot}}

\title{An algebraic approach towards a conjecture on the Davenport constant}
\author{Naveen K. Godara\footnote{Department of Mathematics, Indian Institute of Science Education and Research Bhopal, India. Email: naveen(dot)iiserb(at)gmail.com}, Renu Joshi \footnote{Department of Mathematics, Indian Institute of Science Education and Research Bhopal, India. Email: rjoshi(dot)iiser(at)gmail.com}, and Eshita Mazumdar\footnote{Mathematical and Physical Sciences, School of Arts and Sciences, Ahmedabad University, India. Email: eshita.mazumdar(at)ahduni.edu.in}  }
\date{}
\begin{document}

\begingroup
\allowdisplaybreaks

\maketitle
\begin{abstract} 
\no For a finite group $G,$ $\mathsf{D}(G)$ is defined as the least positive integer $k$ such that for every sequence $S=g_1\bdot g_2\bdot \dotsc \bdot g_k$ of length $k$ over $G$, there exist $1 \le i_1 < i_2 <\cdots < i_m \le k $ such that  $g_{i_1}g_{i_2}\cdots g_{i_m}=1,$ where $1$ is the identity element of $G.$ The small Davenport constant $\mathsf{d}(G)$ is the maximal positive integer $k$ such that there is a sequence of length $k$ over $G$ which has no non-trivial product-one subsequence. In 2004, Dimitrov proved that $\mathsf{D}(G)\leq \mathsf{L}(G)$ for a finite $p$-group $G$, where $p$ is a prime and $\mathsf{L}(G)$ is the Loewy length of $\mathbb{F}_p[G].$ He conjectured that the equality holds for all finite $p$-groups.
In this article, we compute $\mathsf{D}(G)$ for certain classes of finite non-abelian $p$-groups, including metacyclic groups, and show that the conjecture is true by determining the precise value of $\mathsf{L}(G)$. As a consequence, we refine an upper bound on $\mathsf{d}(G)$ recently given by Qu, Li and Teeuwsen, and prove that for specific classes of groups $\mathsf{D}(G)=\mathsf{d}(G)+1$.
We also evaluate $\mathsf{D}(G)$ for finite dicyclic, semi-dihedral and other groups.

\end{abstract}

\textbf{Keywords:}
 Zero-sum Problems, Davenport Constant, Loewy Length, Finite $p$-groups\\

2010 AMS Classification Code: 20D60, 20D15, 11B75.
 
\section{Introduction and statements of main results}

The concept of the Davenport constant arose in the study of class groups of algebraic number fields. Specifically, if $K$ is an algebraic number field and $G$ is its class group,
the Davenport constant $\dsone(G)$ represents the maximal number of
prime ideals (counting multiplicity) that can occur in the prime decomposition of the principal ideal generated by an irreducible integer in $K$ (see \cite{Rog} for more details). For a finite group $G$ (written multiplicatively), the \textit{Davenport constant} $\dsone(G)$ is defined as the least positive integer $k$ such that any sequence of $k$ elements from $G$ (repetition allowed) contains a subsequence of $S$ whose product of terms equals the identity element when multiplied in some order.

\no For a finite abelian group $G$, $\dsone(G)$ represents the well-known Davenport constant. The question of determining $\dsone(G)$ was proposed by H. Davenport in 1966. The study of the Davenport constant is a classical and challenging problem in the fields of additive and combinatorial number theory. The exact value of $\dsone(G)$ is known only for certain types of finite abelian groups, including all finite abelian groups of rank at most two \cite{O2}, $p$-groups of any rank \cite{O1}, where $p$ is a prime, and for specific classes of abelian groups.

\no To better understand this concept, we define some essential terms. Let $G$ be a finite multiplicative group with the identity element $1$, and let $S=g_1\bdot g_2\bdot \dotsc \bdot g_{\ell}$ be a sequence of length $\ell$ over $G$. A sequence
$T=g_{i_1}\bdot g_{i_2}\bdot \dotsc \bdot g_{i_{m}}$ is said to be a {\it subsequence} of $S$
if $1\leq {i_1},\dotsc, {i_m}\leq {\ell}$ and ${i_1},\dotsc, {i_m}$ are pairwise distinct. A sequence $S=g_1\bdot g_2\bdot \dotsc \bdot g_{\ell}$ is said to be a \textit{product-one sequence} if $\prod_{i=1}^{\ell} g_{\sigma(i)}=1$ for some $\sigma \in \mathfrak{S}_{\ell},$ where $\mathfrak{S}_{\ell}$ is the symmetric group of $\ell$ symbols.

\no We remark that for a finite group $G$, $\dsone(G)=\mathsf{d}(G)+1$, where $\mathsf{d}(G)$ is the \textit{small Davenport constant}, which is the maximal positive integer $k$ such that there is a sequence of length $k$ over $G$ that does not contain any non-trivial product-one subsequence. 

\no For non-abelian groups, there are several natural extensions of the Davenport constant.
Over the past few decades, mathematicians have explored various versions of the Davenport constant for finite non-abelian groups and established their connections to distinct fields. In 1977, Olson and White \cite{O} introduced the ordered version of the Davenport constant. The {\it ordered Davenport constant} $\mathsf{D}(G)$ is defined as the least positive integer $k$ such that for every sequence $S=g_1\bdot g_2\bdot \dotsc \bdot g_{k}$ of length $k$ over $G$ there exist $1 \le i_1 < i_2 <\cdots < i_m \le k $ such that $g_{i_1}g_{i_2}\cdots g_{i_m}=1.$ We want to point out that in \cite{D}, $\mathsf{D}(G)$ is referred to as the strong Davenport constant, but in the literature, the strong Davenport constant \cite{chapman} is meant for something else. So, to avoid confusion, we call it the ordered Davenport constant. By the definitions of $\mathsf{d}(G)$ and $\mathsf{D}(G)$, it is evident that $\mathsf{d}(G)+1 \le \mathsf{D}(G)$ for any finite group $G$. In the same paper, Olson and White gave a general upper bound $\mathsf{D}(G) \le \big\lceil \frac{|G|+1}{2} \big\rceil$ for any finite non-cyclic group $G$. Therefore, it follows that $\mathsf{d}(G) \le \big\lfloor \frac{|G|}{2}\big\rfloor$ for any finite non-cyclic group $G$. Another version was introduced by Geroldinger and Grynkiewicz (see \cite{AG} and \cite{GG} for more details), known as the large Davenport constant. The \textit{large
Davenport constant} $\mathcal{D}(G)$ is defined as the maximal length of a product-one sequence that cannot be partitioned into two non-trivial product-one subsequences. It was also shown that $\mathsf{d}(G)+1\leq\mathcal{D}(G)$, with equality holds when $G$ is abelian.

\no From the above discussion, it is clear that $\mathsf{d}(G)+1 \le \min \{\mathsf{D}(G),\mathcal{D}(G)\}$ for any finite group $G$. However, there is no concrete relationship between $\mathsf{D}(G)$ and $\mathcal{D}(G).$ For example, it has been observed that if $G$ is the $\text{ non-abelian group of order } 27 \text{ of exponent } 3 $, then $\mathsf{d}(G)=6$, $\mathcal{D}(G)= 8$, and $\mathsf{D}(G)= 9$. On the other hand, when $G$ is the non-abelian group of order $27$ of exponent $9$, then $\mathsf{d}(G)= 10$, $\mathcal{D}(G)= 12$ and $\mathsf{D}(G)= 11$ (we refer to  \cite{NS}, \cite{CDS}, and \cite{D} for details). 

\no If $G$ is abelian, we have $\mathsf{d}(G)+1=\mathsf{D}(G) = \mathcal{D}(G).$ Once the question shifts to determining the precise value of these invariants for finite non-abelian groups, the problem becomes more complicated than finding the same for finite abelian groups. Recently, Qu, Li and Teeuwsen proved the following upper bound on $\mathsf{d}(G)$: 
\begin{thm}\label{QuLiTeeuwsen2022}\emph{\cite[Theorem 1.1]{QuLiTeeuwsen2022}} Let $G$ be a finite non-cyclic group, and $p$ be the smallest prime divisor of $|G|$. Then
$$\mathsf{d}(G) \le |G|/p + p - 2$$
with equality if $G$ contains a cyclic subgroup of index $p$.
\end{thm}
\no The invariants $\mathsf{d}(G)$ (hence $\dsone(G)$) and $\mathcal{D}(G)$ have been studied extensively, but not much is known about $\mathsf{D}(G)$, for more details, we refer \cite{NS,GG}. So, throughout the paper, we mainly focus on $\mathsf{D}(G)$.

\no Building upon the bound for $\mathsf{D}(G)$ provided by Olson and White, we can derive our first result, which is as follows:
\begin{thm}\label{thm1}
Let $n \ge 2 $ be an integer. Then\\
\no \emph{(i)} For the dicyclic group $Q_{4n} = \langle x,y \mid x^2 = y^n,  y^{2n} =1 , x^{-1}yx=y^{-1}\rangle,$ we have 
$$\mathsf{D}(Q_{4n})=2n+1.$$\\ 
\no \emph{(ii)}   For the semi-dihedral group $SD_{8n}= \langle x,y \mid x^2 = y^{4n}=1,  x^{-1}yx= y^{2n-1}\rangle,$ we have
    $$\mathsf{D}(SD_{8n}) = 4n+1.$$
\end{thm}

\no After Olson and White, in 2004, Dimitrov \cite{D} dealt with the ordered Davenport constant and provided an upper bound on $\mathsf{D}(G)$ for any finite $p$-group $G$. 
\no Let $p$ be a prime number, and $G$ be a finite $p$-group. Then the nilpotency index of the Jacobson radical of the modular group algebra $\mathbb{F}_p[G]$ is known as the \textit{Loewy length} of $\mathbb{F}_p[G]$, and will be denoted by $\mathsf{L}(G)$ in this paper. Dimitrov established the following result:
\begin{thm}\label{thm2}\emph{\cite{D}}
    For a prime $p$ and a finite $p$-group $G,$ we have $\mathsf{D}(G) \le \mathsf{L}(G).$
\end{thm}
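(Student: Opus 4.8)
The plan is to exhibit a minimal product-one sequence of length $D(G)-1$ and use it to build a nonzero product inside the group algebra $\mathbb{F}_pG$, thereby bounding $D(G)-1$ by the nilpotency index of the augmentation ideal. Concretely, recall from the discussion in the excerpt that there is a sequence $S = g_1 g_2 \cdots g_{D(G)-1}$ over $G$ that is minimal, i.e. it has no non-empty \emph{ordered} product-one subsequence (in particular $S$ itself is not product-one, and neither is any sub-product $g_i g_{i+1} \cdots g_j$ of consecutive or non-consecutive ordered terms). From $S$ I would form the element
$$
x \;=\; (g_1 - 1)(g_2 - 1)\cdots (g_{D(G)-1} - 1) \;\in\; \mathbb{F}_pG,
$$
a product of $D(G)-1$ generators of $J = J(\mathbb{F}_pG)$, hence $x \in J^{\,D(G)-1}$. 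The crux is to show $x \neq 0$; once that is done, $J^{\,D(G)-1} \neq 0$, so the nilpotency index $L(G)$ of $J$ satisfies $L(G) \ge D(G)-1+1 = D(G)$, which is the claim. (Here I use the normalization that $J^{L(G)} = 0$ but $J^{L(G)-1} \neq 0$, so that $L(G)$ is the Loewy length as defined in the paper; one should double-check the off-by-one against the paper's convention and adjust the exponent bookkeeping accordingly.)

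To see $x \neq 0$, expand the product distributively:
$$
x \;=\; \sum_{I \subseteq [D(G)-1]} (-1)^{\,(D(G)-1) - |I|}\; \Bigl(\textstyle\prod_{i \in I}^{\rightarrow} g_i\Bigr),
$$
where $\prod_{i\in I}^{\rightarrow} g_i$ means the product of the $g_i$ with $i \in I$ taken in increasing order of index, and the empty product is $1$. Now I would isolate the coefficient of the group element $h := g_1 g_2 \cdots g_{D(G)-1}$ (the full ordered product). A subset $I$ contributes a copy of $h$ exactly when the ordered product over $I$ equals $h$. I claim the only such $I$ is $I = [D(G)-1]$ itself: if some proper $I$ also gave $\prod_{i\in I}^{\rightarrow} g_i = h$, then comparing with $h = \prod_{i\in [D(G)-1]}^{\rightarrow} g_i$ and cancelling would produce a non-empty ordered product-one subsequence of $S$ formed by the complementary indices $[D(G)-1]\setminus I$ — the careful version of this cancellation step, making sure it really yields an ordered sub-product equal to $1$, is where the minimality of $S$ gets used, and this is the step I expect to require the most care. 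Granting the claim, the coefficient of $h$ in $x$ equals $(-1)^{0} = 1 \neq 0$ in $\mathbb{F}_p$, so $x \neq 0$.

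The main obstacle, then, is the combinatorial cancellation argument: showing that no proper ordered sub-product of $S$ can equal the full ordered product $h$, equivalently that $h$ appears in the expansion of $x$ with multiplicity exactly one. In a non-abelian group one cannot simply cancel a middle chunk of a product, so the argument must be arranged so that the would-be shorter product and $h$ share a common \emph{prefix} and \emph{suffix} that can be cancelled on the left and right respectively, leaving a genuine ordered product-one subsequence in the middle; alternatively one can induct on $D(G)-1$, peeling off $(g_1-1)$ and tracking how the coefficient of $h$ propagates. If the direct coefficient-of-$h$ approach proves awkward in full generality, a fallback is to choose, among all minimal sequences of length $D(G)-1$, one that is extremal in some auxiliary sense (e.g. with a fixed target element $h$ that is not representable by any proper ordered sub-product), which is exactly what minimality buys us. Once $x \neq 0$ is established the rest is immediate from the definition of $L(G)$.
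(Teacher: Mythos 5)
Your overall strategy (map a long sequence with no ordered product-one subsequence to the element $x=(g_1-1)\cdots(g_{D(G)-1}-1)\in J^{\,D(G)-1}$ and show $x\neq 0$, so that the nilpotency index of $J$ is at least $D(G)$) is exactly the right one; the paper itself does not reprove Theorem~\ref{thm2} but cites Dimitrov, whose argument is of this augmentation-ideal type. However, the step you flag as delicate is in fact a genuine gap: your claim that the full ordered product $h=g_1g_2\cdots g_{D(G)-1}$ arises from no proper subset $I$ is false in general for non-abelian groups, and the ``cancel the complementary indices'' heuristic cannot be repaired, because in a non-abelian group deleting a middle chunk of an ordered product does not correspond to any cancellation. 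Concretely, in the Heisenberg group of order $p^3$ take $g_1=ab^{-1}a^{-1}$, $g_2=a$, $g_3=b$: one checks that no non-empty ordered subproduct of $g_1g_2g_3$ equals $1$, yet $g_1g_2g_3=a=g_2$, so the target element $h$ is hit both by $I=\{1,2,3\}$ and by the proper subset $I=\{2\}$. Thus the coefficient of $h$ in the expansion of $x$ need not be $1$, and your route to $x\neq 0$ breaks down.

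The repair is small but essential: extract the coefficient of the identity element $1$ instead of the coefficient of $h$. Expanding
\[
x=\sum_{I\subseteq [D(G)-1]}(-1)^{(D(G)-1)-|I|}\;\prod_{i\in I}^{\rightarrow} g_i ,
\]
a subset $I$ contributes to the coefficient of $1$ exactly when its ordered product equals $1$, and by the defining property of a sequence of length $D(G)-1$ (it has \emph{no} non-empty ordered product-one subsequence --- you do not even need the stronger ``minimal product-one'' formulation) the only such $I$ is $I=\emptyset$. Hence the coefficient of $1$ in $x$ is $(-1)^{D(G)-1}\neq 0$ in $\mathbb{F}_p$, so $x\neq 0$, $J^{\,D(G)-1}\neq 0$, and $L(G)\geq D(G)$ with your bookkeeping for the Loewy length unchanged. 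With this substitution your proof is complete and coincides with the standard (Dimitrov-style) argument.
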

\no For finite abelian $p$-groups and for the finite group of order $p^3$ with exponent $p^2$, equality holds in Theorem \ref{thm2}. Dimitrov proved the above theorem for the finite group of order $p^3$ with exponent $p$ when $p \equiv 3\ (\textrm{mod}\ 4)$. Based on these results, he conjectured the following {\cite{D}}: 
\begin{conj}\label{conj1}
    For a prime $p$ and a finite $p$-group $G,$ we have $\mathsf{D}(G)= \mathsf{L}(G).$
\end{conj}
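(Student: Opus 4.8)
The plan is to prove the reverse of Dimitrov's inequality, namely $D(G)\ge L(G)$ for every finite $p$-group $G$; together with Theorem~\ref{thm2} this forces equality. Since $D(G)$ is the least $k$ such that every length-$k$ sequence over $G$ contains a nonempty ordered product-one subsequence, it suffices to exhibit, for each $G$, a single sequence of length $L(G)-1$ that has no nonempty ordered product-one subsequence.

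First I would record the algebraic dictionary already underlying Theorem~\ref{thm2}. For a sequence $S=g_1\cdots g_k$ one expands, in $\mathbb{F}_pG$,
$$\prod_{i=1}^{k}(g_i-1)=\sum_{I\subseteq[k]}(-1)^{k-|I|}\,g_I,$$
where $g_I$ is the ordered product of the $g_i$ with $i\in I$ taken with increasing indices. The coefficient of $1\in G$ is $\sum_{I:\,g_I=1}(-1)^{k-|I|}$, so if $S$ has no nonempty ordered product-one subsequence this coefficient equals $(-1)^k\ne0$; hence $\prod_i(g_i-1)$ is a nonzero element of $J^{k}$ and $k\le L(G)-1$. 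The task is therefore to attain this extremal length: choose $g_1,\dots,g_{L(G)-1}$ whose product $\prod_i(g_i-1)$ is a nonzero multiple of the generator of the one-dimensional top power $J^{L(G)-1}$, while directly ruling out every nonempty ordered product-one subsequence.

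To locate $L(G)$ and organize the construction I would invoke Jennings' theorem: with the Brauer--Jennings--Zassenhaus series $M_1=G$, $M_n=[M_{n-1},G]\,M_{\lceil n/p\rceil}^{\,p}$ and $d_n=\dim_{\mathbb{F}_p}(M_n/M_{n+1})$, one has $M_n=G\cap(1+J^n)$ and $L(G)=1+(p-1)\sum_{n\ge1}n\,d_n$. Fixing a Jennings basis $\bar x_1,\dots,\bar x_r$ (group elements with $\bar x_j\in M_{w_j}\setminus M_{w_j+1}$ refining the filtration) makes the monomials in $u_j=\bar x_j-1$ an $\mathbb{F}_p$-basis adapted to the powers of $J$, with $u_1^{p-1}\cdots u_r^{p-1}$ spanning $J^{L(G)-1}$. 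The abelian model is instructive: for $G=\bigoplus_i\mathbb{Z}/p^{e_i}$ the extremal sequence $\prod_i g_i^{(p^{e_i}-1)}$ has length $\sum_i(p^{e_i}-1)=L(G)-1$, so each weight-one generator $g_i$ is repeated $(p-1)$ times the total weight of its tower $g_i,g_i^p,\dots,g_i^{p^{e_i-1}}$, rather than the naive ``each basis element $p-1$ times''. The non-abelian construction must reproduce this folding of a whole weight-tower into repeated uses of low-weight generators, now with commutators accounting for the higher $M_n$.

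I would carry this out by induction down the series. Choose a central subgroup $\langle z\rangle\cong\mathbb{Z}/p$ inside the last nontrivial term $M_c$ (which is central and elementary abelian since $[M_c,G]\subseteq M_{c+1}=1$ and $M_c^p\subseteq M_{cp}=1$), so $z$ has weight $c$; setting $\bar G=G/\langle z\rangle$, only $d_c$ drops by $1$, whence $L(G)=L(\bar G)+(p-1)c$. Assuming an extremal sequence $\bar S$ of length $L(\bar G)-1$ for $\bar G$, I would lift it to $G$ and append a block of $(p-1)c$ terms built from preimages of the $z$-tower, aiming to prove $D(G)\ge D(\bar G)+(p-1)c$. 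The step I expect to be the main obstacle is the verification here: lifting turns each ordered subsequence of $\bar S$ that was product-one in $\bar G$ into one with product in $\langle z\rangle\setminus\{1\}$, and one must guarantee that no such subsequence can combine, in any order-respecting interleaving, with the appended block to give product $1$, while the block itself stays product-one-free. Because these products are ordered and $G$ is non-commutative, controlling them demands exact bookkeeping of the commutator contributions to the $M_n$; the plan is to choose $z$ and the Jennings basis so that the block meets the lift only through the central tower $\langle z\rangle$, reducing each interleaving to a tracked power of $z$ and making the induction close uniformly across all nilpotency classes and ranks.
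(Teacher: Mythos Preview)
The statement you are attempting to prove is presented in the paper as an open \emph{conjecture} (Conjecture~\ref{conj1}), not as a theorem. The paper does not claim a proof of $D(G)=L(G)$ for all finite $p$-groups; it establishes the equality only for the specific families in Theorems~\ref{thm6} and~\ref{thm7}, in each case by writing down an explicit sequence of length $L(G)-1$ and verifying by hand (via a small system of congruences and a quadratic non-residue argument) that it has no nonempty ordered product-one subsequence. So there is no ``paper's own proof'' of Conjecture~\ref{conj1} to compare against; your proposal is an attempted attack on an open problem.

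Your inductive scheme is a natural strategy, but as written it has a genuine gap at precisely the point you flag as the main obstacle, and a second gap you do not flag. First, the block of $(p-1)c$ appended terms is not actually specified. When $c=1$ you can take $p-1$ copies of $z$, but for $c>1$ you cannot: $z$ has order $p$, so $p$ copies already give a product-one subsequence, while you need $(p-1)c>p-1$ terms. You allude to ``preimages of the $z$-tower'', but in a group of exponent $p$ (already the Heisenberg group $p^{1+2}$ with $c=2$) no such tower of $p$-th powers exists; $z$ arises only as a commutator, and it is unclear which $(p-1)c$ elements should be appended and why their ordered products avoid $1$. Second, even granting a block, your lift of $\bar S$ has ordered subproducts landing in $\langle z\rangle\setminus\{1\}$, and you must rule out \emph{every} order-respecting interleaving with the block; your plan to ``reduce each interleaving to a tracked power of $z$'' presupposes that the block commutes with the lift modulo $\langle z\rangle$, which is generally false once the block itself must involve non-central, low-weight generators. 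The paper's explicit constructions for $G_1,G_3$ with $\gamma=\sigma=1$ already require a delicate choice of four distinct elements and a discriminant argument; there is no indication that a uniform inductive choice exists, and completing your outline would amount to resolving the conjecture.
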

\no In this article, we consider certain classes of finite $p$-groups (where $p$ is odd) and determine the exact values of the small Davenport constant, ordered Davenport constant, Loewy length and their relations. As a result, we prove Conjecture \ref{conj1} for these groups.

\no We consider the following groups:\\
$$G_1(\alpha,\beta,\gamma) =  (\langle c \rangle\times \langle a\rangle)\rtimes \langle b\rangle,$$ 
\no where $[a,b]=c, [a,c]=[b,c]=1,
    o(a)=p^{\alpha}, o(b)=p^{\beta}, o(c)=p^{\gamma}, \alpha, \beta, \gamma \in \mathbb{N} \;\text{with}\; \alpha \geq \beta \geq\gamma \geq 1.$
$$G_2(\alpha,\beta) = \langle a \rangle\rtimes \langle b \rangle,$$ 
\no where $[a,b]=a^{p^{\alpha-\gamma}},
    o(a)=p^{\alpha}, o(b)=p^{\beta}, o([a,b])=p^{\gamma},$
    $\alpha, \beta, \gamma \in \mathbb{N}\; \text{with}\; \alpha \geq 2\gamma, \beta  \geq \gamma \geq 1.$
$$G_3(\alpha,\beta,\sigma) =  (\langle c \rangle\times \langle a\rangle)\rtimes \langle b\rangle,$$
\no where $[a,b]=a^{p^{\alpha-\gamma}} c, 
    [c,b]= a^{-p^{2(\alpha-\gamma)}}c^{-p^{\alpha-\gamma}},$ $o(a)=p^{\alpha}, o(b)=p^{\beta}, o(c)=p^{\sigma},
\alpha, \beta, \gamma, \sigma \in \mathbb{N}$ with $\beta \ge \gamma > \sigma \geq 1, \alpha +\sigma \geq 2\gamma.$\\
\no For more details related to the above-defined groups, refer to \cite{BK}.

\no We now present the main result of this article:
\begin{thm}\label{thm6}
Let $p$ be an odd prime. Then we have the following:\\
\no  \emph{(i)} $\mathsf{L}(G_1(\alpha,\beta,\gamma))=p^{\alpha}+p^{\beta}+2p^{\gamma}-3,~~\text{and}~~\mathsf{D}(G_1(\alpha,\beta,1))=p^{\alpha}+p^{\beta}+2p-3.$\\ 
\emph{(ii)} $\mathsf{L}(G_2(\alpha,\beta))=\mathsf{D}(G_2(\alpha,\beta))=p^{\alpha}+p^{\beta}-1.$\\
\no \emph{(iii)} $\mathsf{L}(G_3(\alpha,\beta,\sigma))=p^{\alpha}+p^{\beta}+2p^{\sigma}-3,~~\text{and}~~\mathsf{D}(G_3(\alpha,\beta,1))=p^{\alpha}+p^{\beta}+2p-3.$
\end{thm}

\begin{rem}
\no \emph{(i)} In proving Theorem \emph{\ref{thm6}}, we use the structure of power subgroups and a concept of quadratic non-residue for binary forms. To the best of our knowledge, the methodology used to prove Theorem \emph{\ref{thm6}}, which modifies Dimitrov's approach, has not been previously utilized to examine the defined combinatorial invariants.\\ 
\no \emph{(ii)} For the group $G_1(\alpha,\beta,1),  
 G_2(\alpha,\beta)$ and $G_3(\alpha,\beta,1)$, it is clear that the upper bounds in Theorem \emph{\ref{thm6}} comparing to Theorem \emph{\ref{QuLiTeeuwsen2022}} are better.\\
\no \emph{(iii)} Theorem \emph{\ref{thm6}} implies that Conjecture \emph{\ref{conj1}} holds for $G_1(\alpha,\beta,1), G_2(\alpha,\beta)$, and $G_3(\alpha,\beta,1)$.\\
\no \emph{(iv)} Note that the group $G_1(1,1,1)$ has order $p^3$ and exponent $p$. This completes Dimitrov's result \emph{\cite[Theorem 3]{D}} for any odd prime $p$.

\end{rem} 
\no We obtain the following result for the metacyclic group $G_2(\alpha,\beta)$.
\begin{cor}\label{Cor-thm6} For $G=G_2(\alpha,\beta)$, we have $\mathsf{d}(G)+1=\mathsf{D}(G)=p^{\alpha}+p^{\beta}-1.$
\end{cor}

\no We also consider certain even-order non-abelian groups verifying Conjecture {\ref{conj1}}:

\begin{thm}\label{thm7}
Let $r$ be an integer. Then
$$\mathsf{D}(G)=2^{r-1}+1=\mathsf{L}(G)$$
if $G$ is one of the following groups:
 
\no \emph{(i)} $D_{2^r}=\langle x,y| x^{2}=y^{2^{r-1}}=1,x^{-1}yx=y^{-1}\rangle$ for $r \geq 3$,

\no \emph{(ii)} $Q_{2^r}=\langle x,y| x^{2}=y^{2^{r-2}},y^{2^{r-1}}=1,x^{-1}yx=y^{-1}\rangle$ for $r \geq 3,$

\no \emph{(iii)} $SD_{2^r}=\langle x,y| x^{2}=y^{2^{r-1}}=1,x^{-1}yx=y^{2^{r-2}-1}\rangle$ for $r \geq 4,$

\no \emph{(iv)} $M_{2^r}=\langle x,y|x^{2}=y^{2^{r-1}}=1,x^{-1}yx=y^{2^{r-2}+1} \rangle$ for $r \geq 4.$
\end{thm}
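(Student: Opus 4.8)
The plan is to prove that each of the four groups $G$ --- which all have order $2^{r}$, contain the cyclic subgroup $\langle y\rangle$ of index $2$, and are non-cyclic (since $x\notin\langle y\rangle$, as otherwise $G=\langle y\rangle$) --- satisfies $D(G)=L(G)=2^{r-1}+1$. The bounds on $D(G)$ come essentially for free. As $G$ is non-cyclic, the Olson--White inequality gives $D(G)\le\big\lceil\tfrac{|G|+1}{2}\big\rceil=2^{r-1}+1$; moreover $Q_{2^{r}}$ is a dicyclic group and $SD_{2^{r}}$ a semidihedral group, so for those two the same upper bound also follows from Theorem~\ref{thm1}. For the reverse inequality I would exhibit the explicit sequence $S=x\,y^{(2^{r-1}-1)}$, of length $2^{r-1}$, and check that it has no non-empty ordered product-one subsequence: any such subsequence either omits $x$, hence equals $y^{k}$ with $1\le k\le 2^{r-1}-1$, which is not the identity since $o(y)=2^{r-1}$; or it contains $x$, necessarily as its first term, hence equals $xy^{k}$ with $0\le k\le 2^{r-1}-1$, which lies in the coset $x\langle y\rangle\neq\langle y\rangle$ and so is not the identity. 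Therefore $D(G)\ge 2^{r-1}+1$, whence $D(G)=2^{r-1}+1$; and since $D(G)\le L(G)$ by Theorem~\ref{thm2}, we also obtain $L(G)\ge 2^{r-1}+1$ at no extra cost.

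It then remains to establish the reverse inequality $L(G)\le 2^{r-1}+1$, that is, $J^{2^{r-1}+1}=0$. For this I would use the Brauer--Jennings--Zassenhaus dimension subgroups $D_{n}(G)=\prod_{i\,2^{j}\ge n}\gamma_{i}(G)^{2^{j}}$ together with Jennings' theorem, which over $\mathbb{F}_{2}$ reads $L(G)=1+\sum_{n\ge1}n\,d_{n}$ with $d_{n}=\dim_{\mathbb{F}_{2}}\big(D_{n}(G)/D_{n+1}(G)\big)$. A computation in each of the four families shows that in all of them $\Phi(G)=D_{2}(G)=\langle y^{2}\rangle\cong C_{2^{r-2}}$, and more generally $D_{n}(G)=\langle y^{\,2^{\lceil\log_{2}n\rceil}}\rangle$ for every $n\ge 2$. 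The lower central series differs between the families --- one has $\gamma_{i}(G)=\langle y^{2^{i-1}}\rangle$ for the dihedral, quaternion and semidihedral groups, whereas the modular group has nilpotency class two with $\gamma_{2}(G)=\langle y^{2^{r-2}}\rangle$ and $\gamma_{i}(G)=1$ for $i\ge 3$ --- but in every case the $2$-power terms $G^{2^{j}}$ dominate, and the chain of dimension subgroups collapses onto the subgroups $\langle y^{2^{k}}\rangle$ of the cyclic part. Consequently $d_{1}=2$, $d_{2^{k}}=1$ for $1\le k\le r-2$, and $d_{n}=0$ otherwise, which gives $L(G)=1+\big(2+\sum_{k=1}^{r-2}2^{k}\big)=2^{r-1}+1$. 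Combining the two inequalities yields $D(G)=L(G)=2^{r-1}+1$, i.e. Conjecture~\ref{conj1} holds for these groups.

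The $D(G)$ half is the easy one: the Olson--White bound already pins it down from above, so only the one-line construction is needed from below. The main obstacle will be the exact evaluation of $L(G)$, specifically the vanishing $J^{2^{r-1}+1}=0$; the dimension-subgroup bookkeeping is routine, but it must be carried out separately for each family, and the semidihedral case is the most delicate, because there the action of $x$ on $\langle y\rangle$ is neither inversion nor central, so one has to verify with some care that the lower central series together with the $2$-power maps still forces the dimension subgroups to equal the groups $\langle y^{2^{k}}\rangle$. As an alternative to this computation, the value $2^{r-1}+1$ of the Loewy length of $\mathbb{F}_{2}G$, for these classical $2$-groups with a cyclic subgroup of index $2$, can also be taken from the literature.
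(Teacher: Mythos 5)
Your proposal is correct, and its skeleton overlaps with the paper's: both use the same extremal sequence (the paper takes $S=y^{(2^{r-1}-1)}x$, you take $xy^{(2^{r-1}-1)}$; the verification is identical) to get $D(G)\ge 2^{r-1}+1$, and both use Dimitrov's inequality $D(G)\le L(G)$. The difference is how the value $L(G)=2^{r-1}+1$ enters. The paper simply quotes Koshitani's theorem on the nilpotency index of the radical for $2$-groups with a cyclic subgroup of index $2$ (its reference \cite[Theorem 1.6]{K}), so its whole proof is: cited value of $L(G)$, the sequence, and $D\le L$. You instead first pin down $D(G)=2^{r-1}+1$ unconditionally via the Olson--White bound $D(G)\le\lceil(|G|+1)/2\rceil$ (a step the paper does not need here), deduce $L(G)\ge 2^{r-1}+1$ from $D\le L$, and then prove the remaining inequality $L(G)\le 2^{r-1}+1$ by a direct Jennings/dimension-subgroup computation. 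That computation is right: for all four families $M_2=\Phi(G)=\langle y^2\rangle$ and $M_n=\langle y^{2^{\lceil\log_2 n\rceil}}\rangle$ for $n\ge 2$, so $e_1=2$, $e_{2^k}=1$ for $1\le k\le r-2$ and $e_n=0$ otherwise, giving $L(G)=1+2+\sum_{k=1}^{r-2}2^k=2^{r-1}+1$; the semidihedral and modular cases need the small checks you flag (e.g.\ $(xy^i)^2\in\langle y^{2^{r-2}}\rangle$ and $[\langle y^2\rangle,G]\subseteq\langle y^4\rangle$), but they go through. What your route buys is self-containedness (no appeal to Koshitani, and the exact Loewy length is only needed from above); what the paper's route buys is brevity, and you yourself note the literature citation as the alternative, which is exactly the paper's choice.
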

\begin{cor}\label{Cor-thm7}
    For an integer $r\ge 4,$ we have $\mathsf{d}(M_{2^r})+1=\mathsf{D}(M_{2^r})=2^{r-1}+1.$
\end{cor}

\no The rest of the paper is organised as follows. In Section 2, we introduce the necessary notation and recall important background results that will be used throughout the paper. In Section 3, we discuss the structure of groups and compute the exact values of their Loewy length.  In Section 4, we prove our main results, Theorem \ref{thm1}, Theorem \ref{thm6}, and Theorem \ref{thm7}. We conclude the paper with some remarks and open questions.

\section{Preliminaries}

Let $G$ be a finite group, written multiplicatively with the identity element $1$, and let $({\mathcal{F}}(G), \bdot)$ denote the free abelian monoid generated by $G$ (as a set), with the operation "$\bdot$" representing the multiplication in ${\mathcal{F}}(G)$. A sequence $S$ over $G$ is an element of ${\mathcal{F}}(G)$, typically written as $S=g_1\bdot g_2\bdot \dotsc \bdot g_{\ell}$, where the non-negative integer $\ell$ denotes the length of the sequence. The identity element $1_{\mathcal{F}(G)}$ of ${\mathcal{F}(G)}$ is called the empty or trivial sequence, which is simply the sequence having no terms (or length 0). For two sequences $S = g_1 \bdot \dotsc \bdot g_{\ell_1}$ and $T = h_1 \bdot \dotsc \bdot h_{\ell_2}$ over $G$, we write $S \bdot T := g_1 \bdot \dotsc \bdot g_{\ell_1} \bdot h_1 \bdot \dotsc \bdot h_{\ell_2}$ in $\mathcal{F}(G)$. If we denote a sequence $S$ as $g_1^{(n_1)}\bdot g_2^{(n_2)}\bdot \dotsc \bdot g_{k}^{(n_k)}$, this means $g_i$ repeats $n_i$ times in $S,$ and length of the sequence is $n_1+\ldots+n_k.$

\no It is quite useful to have a related notion in which the order of terms matters. 
 Let $(\mathcal{F}^{\ast}(G), \bdot)$ denote the free non-abelian monoid with basis $G$, whose elements are called ordered sequences over $G$, where we use the same notation "$\bdot$" for convenience. An ordered sequence $T$ over $G$ is said to be an {\it ordered subsequence} of $S=g_1\bdot g_2 \bdot g_3 \bdot \dots \bdot g_{\ell}$ $\in \mathcal{F}^{\ast}(G)$ if $T=g_{i_1}\bdot g_{i_2} \bdot g_{i_3} \bdot \dotsc \bdot g_{i_m}$ for some $1\le i_1<i_2<i_3 < \dots <i_m\le \ell$.  
  Additionally, a sequence $S=g_1\bdot g_2 \bdot g_3 \bdot \dots \bdot g_{\ell}$ is said to be \textit{product-one ordered sequence} if $\prod_{i=1}^{\ell} g_{i}=1.$

\no Throughout the paper, we fix some standard notations. For a given positive integer $n$, we denote the set $\{1,\ldots,n\}$ by $[n]$, and 
$\mathfrak{S}_{n}$ denotes the symmetric group of $n$ symbols. The commutator of elements $x, y$ in $G$ is defined as $[x,y]:=x^{-1}y^{-1}xy$. Here, the commutators are left normed, i.e.,
$[x, y, z] = [[x, y], z]$. If $H_i$ is a subgroup of $G$ for $i\in [2],$ then $[H_1,H_2]$ is also a subgroup of $G$ which is generated by $[h_1,h_2],$ where $h_i \in H_i$ for $i\in [2].$ For any $n\in \mathbb{N}$, $G^{n}:=\langle x^n \mid x\in G \rangle$ is a subgroup of $G.$ These groups form a decreasing chain of subgroups of $G$.
For a group $G$, $\gamma_1(G):=G,$ $\gamma_2(G):=[G,G]$ and $\gamma_i(G):=[\gamma_{i-1}(G), G]$ for $i\ge 3$. A finite group $G$ is said to be of nilpotency class two if $\gamma_3(G)=1$ but $ \gamma_i(G) \neq 1 \text{ for } i\in [2].$ 

\no Let $p$ be a prime number, and $G$ be a finite $p$-group. Since the modular group algebra $\mathbb{F}_p[G]$ has a unique two-sided maximal ideal, the Jacobson radical $J$ coincides with the augmentation ideal generated by $\{g-1 \mid g \in G \setminus \{1\}\}$ of $\mathbb{F}_p[G].$
 The dimensions of the Loewy factors $J^i/J^{i+1}$ have been computed by Jennings (see \cite{J} for more details) in terms of the Brauer-Jennings-Zassenhaus series (or $M$-series) $M_{i,p}(G)$ of $G$, where
 $M_{1,p}(G):=G$ and for $i\geq 2$, \[M_{i,p}(G):=[M_{i-1, p}(G), G]M_{j,p}(G)^{p},\] where $j$ is the smallest integer satisfying $jp\geq i$. For convenience, we use the notation $M_i=M_{i,p}(G)$ for all $i\geq 1,$ whenever the underlying group $G$ is understood. Clearly, each $M_i$ is a finite $p$-subgroup of $G,$ and $M_i $ is a normal subgroup of $M_{i-1}$. Through induction, it can be shown that $$G=M_1 \supseteq M_2\supseteq...\supseteq M_d\supseteq M_{d+1}=1$$ for some natural number $d$ such that $M_{d}\neq 1$. We write:
$$|M_i/M_{i+1}|=p^{e_i} \; \; \text{ for some natural number } e_i,  \text{ for all } i\in [d].$$
 \no For our use, we record the following classical result about $\mathsf{L}(G)$.
 \begin{lem}\emph{\cite[Jennings, 1941]{J}}\label{thm5} Let $G$ be a finite $p$-group, and let $\{M_i\}$ denote the $M$-series of $G$. Then\\
\no \emph{(i)} For any index $i$, the quotient $M_i/M_{i+1}$ is a finite elementary abelian $p$-group.\\
\no \emph{(ii)}  The Loewy length $\mathsf{L}(G)=1+(p-1)\sum_{i=1}^d ie_i$, where $d$ is the largest positive integer with $M_{d}\neq 1$ and $p^{e_i}$ is the order of $M_i/M_{i+1}$.
\end{lem}

\section{Power subgroups and Loewy length}
\no From this point onward, we fix $p$ as an odd prime. In the following result, we find the structure of subgroups appearing in the $M$-series in terms of power subgroups.
\begin{prop}\label{Ind} 
Let $G$ be a $p$-group of nilpotency class two. Then for all $s \geq 1$, we have 
$$M_i=
\begin{cases} 
{\gamma_{2}(G)}^{p^{s}} G^{p^{s}} &\mbox{if }~ 2p^{s-1}+1 \le i \le p^s,\\
{\gamma_{2}(G)}^{p^{s}} G^{p^{s+1}} &\mbox{if }~ p^{s}+1 \le i \le 2p^s.
\end{cases}$$
\end{prop}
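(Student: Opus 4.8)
The plan is to reduce the whole computation to two elementary ``arithmetic rules'' for the subgroups $\gamma_2(G)^{(p^k)}G^{(p^m)}$ and then run a single induction on the index $i$ mirroring the recursion $M_i=[M_{i-1},G]\,M_{\lceil i/p\rceil}^{(p)}$. First I would record the class-two power identities $(xy)^{n}=x^{n}y^{n}[y,x]^{\binom n2}$ and $[x^{n},y]=[x,y]^{n}=[x,y^{n}]$, valid for all $x,y\in G$ and $n\ge1$, together with the key arithmetic fact that $p^{k}\mid\binom{p^{k}}{2}$ because $p$ is odd (this is precisely where oddness is used, and why $p=2$ is excluded here). From these, for $m\ge k\ge1$, the subgroup $\gamma_2(G)^{(p^k)}G^{(p^m)}$ is shown to coincide with the set $\{x^{p^m}t:x\in G,\ t\in\gamma_2(G)^{(p^k)}\}$ — the commutator correction in $(xy)^{p^m}$ being absorbed into $\gamma_2(G)^{(p^m)}\subseteq\gamma_2(G)^{(p^k)}$ — and one extracts the two rules used repeatedly below: $[\gamma_2(G)^{(p^k)}G^{(p^m)},G]=\gamma_2(G)^{(p^m)}$ (because $\gamma_2(G)$ is central and $[G^{(p^m)},G]=\gamma_2(G)^{(p^m)}$), and $\big(\gamma_2(G)^{(p^k)}G^{(p^m)}\big)^{(p)}=\gamma_2(G)^{(p^{k+1})}G^{(p^{m+1})}$ (raising $x^{p^m}t$ to the $p$-th power gives $x^{p^{m+1}}t^{p}$, since $t$ is central).

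Next I would note that the intervals $[2p^{s-1}+1,\,p^{s}]$ and $[p^{s}+1,\,2p^{s}]$, $s\ge1$, together with $\{1\}$ and $\{2\}$, partition $\mathbb{Z}_{\ge1}$ (using $2p^{s-1}<p^{s}$, which holds since $p\ge3$), and carry out strong induction on $i$, with base $M_{1}=G$ and $M_{2}=\Phi(G)=\gamma_2(G)G^{(p)}$. The decisive observation is that if $i$ lies in a block indexed by $s$, then $i-1$ and $\lceil i/p\rceil$ lie in strictly earlier blocks, so the inductive hypothesis applies to them; concretely $\lceil i/p\rceil$ lands in the block of the same type (lower or upper) but index $s-1$, which one checks from $\lceil(2p^{s-1}+1)/p\rceil=2p^{s-2}+1$ and $\lceil(p^{s}+1)/p\rceil=p^{s-1}+1$ and the fact that $i\mapsto\lceil i/p\rceil$ sends the current block into the previous one. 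Plugging the inductive values into $M_i=[M_{i-1},G]\,M_{\lceil i/p\rceil}^{(p)}$ and applying the two arithmetic rules yields, at the left end of a lower block, $M_{2p^{s-1}+1}=\gamma_2(G)^{(p^s)}G^{(p^s)}$; at the left end of an upper block, $M_{p^{s}+1}=\gamma_2(G)^{(p^s)}G^{(p^{s+1})}$; and in the interior of a block the value is unchanged, since there $[M_{i-1},G]$ contributes only a factor already present in $M_{\lceil i/p\rceil}^{(p)}$. The one genuinely different spot is $s=1$: for $3\le i\le p$ one has $\lceil i/p\rceil=1$, so $M_{\lceil i/p\rceil}^{(p)}=G^{(p)}$, and it is the term $[M_{i-1},G]=\gamma_2(G)^{(p)}$ that supplies the remaining part of $\gamma_2(G)^{(p)}G^{(p)}$.

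The main obstacle is the first step: controlling $p$-th powers and commutators inside $\gamma_2(G)^{(p^k)}G^{(p^m)}$ in a nilpotency-class-two group, and in particular verifying that $\big(\gamma_2(G)^{(p^k)}G^{(p^m)}\big)^{(p)}$ is \emph{exactly} $\gamma_2(G)^{(p^{k+1})}G^{(p^{m+1})}$ and not something strictly larger. Everything after that is bookkeeping with the ceiling function — confirming at each interval endpoint that $\lceil i/p\rceil$ falls into the claimed previously-computed block — which comes down to the identity $\lceil (ap+1)/p\rceil=a+1$ together with $p\cdot p^{s-1}=p^{s}$ and $2p^{s-1}<p^{s}$ for odd $p$.
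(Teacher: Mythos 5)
Your proposal is correct and follows essentially the same route as the paper: both argue by induction over the index via the Jennings recursion $M_i=[M_{i-1},G]\,M_{\lceil i/p\rceil}^{(p)}$, tracking the same block structure (the paper phrases this as a simultaneous induction on the statements $P(s)$ and $Q(s)$ with an inner induction inside each block, which is the same bookkeeping as your single strong induction on $i$). The only difference is presentational: you spell out the two subgroup identities $[\gamma_2(G)^{(p^k)}G^{(p^m)},G]=\gamma_2(G)^{(p^m)}$ and $\bigl(\gamma_2(G)^{(p^k)}G^{(p^m)}\bigr)^{(p)}=\gamma_2(G)^{(p^{k+1})}G^{(p^{m+1})}$ (together with the use of $p$ odd in $p^k\mid\binom{p^k}{2}$), which the paper's terser proof uses implicitly.
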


\begin{proof} From the definition of the $M$-series of $G$, we have $M_1=G$ and $M_2=\gamma_{2}(G)G^{p}$. Assume that $P(s)$ and $Q(s)$ represent the equalities 
\begin{eqnarray*}
P(s) & : & M_{2p^{s-1}+1}= \cdots =M_{p^{s}}={\gamma_{2}(G)}^{p^{s}} G^{p^{s}},\\
Q(s) & : & M_{p^{s}+1}= \cdots =M_{2p^{s}}={\gamma_{2}(G)}^{p^{s}} G^{p^{s+1}},
\end{eqnarray*}
for $s \ge 1.$ We will use simultaneous induction on $s \ge 1$ to prove these. We assume that $P(s)$ and $Q(s)$ are both true (the base case $s=1$ is similar). To prove $P(s+1),$ assume that 
$$M_j={\gamma_{2}(G)}^{p^{s+1}} G^{p^{s+1}} \text{ for all } 2p^{s}+1 \le j \le k$$ for some $k< p^{s+1}.$ Now, by the definition of the $M$-series, we have 
$$M_{j+1}= [M_j,G]M^{p}_{\lceil \frac{j+1}{p} \rceil}.$$
Since $2p^{s-1}+1 \le \lceil \frac{j+1}{p} \rceil \le p^s,$ using induction hypothesis, we have 
$$M_{j+1}= [{\gamma_{2}(G)}^{p^{s+1}} G^{p^{s+1}},G] \Big({\gamma_{2}(G)}^{p^{s}} G^{p^{s}} \Big)^{p}.$$
Using the fact that $G$ has nilpotency class two, we obtain
$$M_{j+1}= [G^{p^{s+1}},G] \Big({\gamma_{2}(G)}^{p^{s}} G^{p^{s}} \Big)^{p}={\gamma_{2}(G)}^{p^{s+1}} G^{p^{s+1}}.$$
This proves $P(s+1)$. The proof of $Q(s+1)$ is similar.
\end{proof}
\no From the above proposition, it remains to find the structure of power subgroups. 
\begin{prop}{\label{power}}
Let $G$ be a finite $p$-group of nilpotency class two with $G=\langle a,b \rangle.$ Then $G^{p^s}=\langle a^{p^s}, b^{p^s}, [a,b]^{p^s} \rangle$ for all $s\geq 1$.
\end{prop}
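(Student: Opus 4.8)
The plan is to prove the nontrivial inclusion $G^{p^s}\subseteq H$, where I set $H:=\langle a^{p^s},b^{p^s},[a,b]^{p^s}\rangle$; the reverse inclusion $H\subseteq G^{p^s}$ is immediate, since each listed generator is a $p^s$-th power. Throughout I would use two standard facts about a group $G$ of nilpotency class two: $\gamma_2(G)$ is central, and since $G=\langle a,b\rangle$, bilinearity of commutators in a class-two group forces $\gamma_2(G)=\langle [a,b]\rangle$ to be cyclic, generated by $c:=[a,b]$. Consequently every element of $G$ can be written in the form $a^ib^jc^k$.

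First I would check that $H\trianglelefteq G$. Since $c$ is central, $c^{p^s}$ is conjugation-invariant; and for any $g\in G$ we have $g^{-1}ag=a[a,g]$ with $[a,g]\in\langle c\rangle$ central and commuting with $a$, so $g^{-1}a^{p^s}g=(a[a,g])^{p^s}=a^{p^s}[a,g]^{p^s}\in H$ because $[a,g]^{p^s}$ is a power of $c^{p^s}$. The identical argument handles $b^{p^s}$. As $H$ is generated by these elements and conjugation is an automorphism, $H$ is normal, so $\bar G:=G/H$ makes sense.

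Next I would show $\bar G$ has exponent dividing $p^s$; this gives $g^{p^s}\in H$ for all $g\in G$, hence $G^{p^s}\subseteq H$. Write an arbitrary element of $\bar G$ as $\bar a^i\bar b^j\bar c^k$, where $\bar c:=\overline{[a,b]}$ is central and $\bar a^{p^s}=\bar b^{p^s}=\bar c^{p^s}=1$. Using the class-two power identity $(xy)^n=x^ny^n[y,x]^{\binom n2}$, centrality of $\bar c$, and bilinearity of the commutator, one computes
$$(\bar a^i\bar b^j\bar c^k)^{p^s}=(\bar a^i\bar b^j)^{p^s}=\bar a^{ip^s}\,\bar b^{jp^s}\,[\bar b^j,\bar a^i]^{\binom{p^s}{2}}=[\bar b,\bar a]^{\,ij\binom{p^s}{2}}=\bar c^{\,-ij\binom{p^s}{2}}.$$
Here the hypothesis that $p$ is odd is essential: $p^s$ is then odd, so $p^s-1$ is even and $\binom{p^s}{2}=p^s\cdot\tfrac{p^s-1}{2}$ is a multiple of $p^s$, whence $\bar c^{\binom{p^s}{2}}=1$. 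Thus $(\bar a^i\bar b^j\bar c^k)^{p^s}=1$, as needed.

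The only point demanding care — rather than a genuine obstacle — is bookkeeping in the class-two identities: using the convention $[x,y]=x^{-1}y^{-1}xy$ consistently when deriving $(xy)^n=x^ny^n[y,x]^{\binom n2}$ and when reducing $[\bar b^j,\bar a^i]$ to a power of $\bar c$, and confirming that $\gamma_2(G)=\langle[a,b]\rangle$ so that $[a,g]^{p^s}$ indeed lands in $\langle c^{p^s}\rangle$. Everything else is a short direct verification. One could instead try to induct on $s$, but since $G^{p^s}$ need not be $2$-generated, the quotient argument above is cleaner.
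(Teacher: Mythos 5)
Your proof is correct, and its computational heart --- the class-two identity $(xy)^n=x^ny^n[y,x]^{\binom{n}{2}}$ together with the observation that $\binom{p^s}{2}=p^s\cdot\frac{p^s-1}{2}$ is divisible by $p^s$ because $p$ is odd --- is exactly the calculation the paper performs; but your framing of the hard inclusion $G^{p^s}\subseteq H:=\langle a^{p^s},b^{p^s},[a,b]^{p^s}\rangle$ is genuinely different. The paper first invokes the fact that for a class-two $p$-group with $p$ odd the power subgroup coincides with the \emph{set} of $p^s$-th powers, $G^{p^s}=\{g^{p^s}\mid g\in G\}$ (citing \cite[Theorem 2.10]{F}, i.e.\ regularity), and then writes an arbitrary $y\in G^{p^s}$ as $y=(a^ib^j[a,b]^t)^{p^s}$ and expands it into $a^{ip^s}b^{jp^s}[a,b]^{p^s(\cdot)}$. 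You instead verify that $H$ is normal and that $G/H$ has exponent dividing $p^s$, so every $p^s$-th power, and hence the subgroup they generate, lies in $H$. Your route buys self-containment: you never need to know that an element of $G^{p^s}$ is a single $p^s$-th power, so the appeal to regular $p$-group theory disappears; the price is the (easy) extra normality check, which the paper's argument does not require. Both arguments use the oddness of $p$ in the same essential way, consistent with the paper fixing $p$ odd at the start of Section 3, and both rest on $\gamma_2(G)=\langle[a,b]\rangle$ being central so that every element has the form $a^ib^j[a,b]^k$.
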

\begin{proof}
 Since $G$ is a $p$-group of nilpotency class two, we have $G^{p^s}=\{g^{p^s}\;|\; g \in G\}$ \cite[Theorem 2.10]{F}.
 By definition, $\langle a^{p^s}, b^{p^s}, [a,b]^{p^s} \rangle \subseteq G^{p^s}$. Conversely, let $y \in G^{p^s}$, so $y=h^{p^s} \;\text{for some}\; h \in G$. Therefore,
\begin{align*}
    y & =(a^i b^j [a, b]^t)^{p^s}&&\text{(for some non-negative integers $i,j,t$)}\\
    & =a^{ip^s}(b^j[a,b]^t)^{p^s}[b^j[a,b]^t, a^i]^{p^s \choose 2}\\
    &=a^{ip^{s}}b^{jp^{s}}[a,b]^{p^{s}(t-\frac{ij(p^s -1)}{2})}&&\text{(since $\frac{(p^s -1)}{2}$ is an integer)}.
\end{align*}
This implies $y \in \langle a^{p^s}, b^{p^s}, [a,b]^{p^s} \rangle$. Thus $G^{p^s}=\langle a^{p^s}, b^{p^s}, [a,b]^{p^s} \rangle$ for all $s\geq 1$.
\end{proof}
\begin{prop}
The groups $G_1(\alpha,\beta,\gamma),G_2(\alpha,\beta)$ and $G_3(\alpha,\beta,\sigma)$ are $2$-generated $p$-groups of nilpotency class two.   
\end{prop}
\begin{proof}
It is clear from \cite{BK}.    
\end{proof}
\no To prove Theorem \ref{thm6}, we use the following crucial results:

\begin{lem}\label{UG1}
Let $p$ be an odd prime and  $G$ be a finite non-abelian $p$-group such that
 $$G =  (\langle c \rangle\times \langle a\rangle)\rtimes \langle b\rangle,$$ where $[a,b]=c, [a,c]=[b,c]=1,$ 
    $o(a)=p^{\alpha}, o(b)=p^{\beta}, o(c)=p^{\gamma}, 
    \alpha, \beta, \gamma \in \mathbb{N} \;\text{with}\; \alpha \geq \beta \geq\gamma \geq 1.$ Then $\mathsf{L}(G)=p^\alpha +p^\beta +2p^\gamma-3$.
    
\end{lem}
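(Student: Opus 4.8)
The plan is to apply Jennings' Theorem \ref{thm5}(3), which gives $L(G) = 1 + (p-1)\sum_{i=1}^d i e_i$, so everything reduces to computing the Brauer--Jennings--Zassenhaus $M$-series of $G$ together with the jump sizes $e_i = \log_p |M_i/M_{i+1}|$. Since $G$ is a $2$-generator $p$-group of nilpotency class two, I would invoke Proposition \ref{Ind}: for each $s \ge 1$, $M_i = \gamma_2(G)^{(p^s)} G^{(p^s)}$ when $2p^{s-1}+1 \le i \le p^s$, and $M_i = \gamma_2(G)^{(p^s)} G^{(p^{s+1})}$ when $p^s+1 \le i \le 2p^s$. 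In the present group $\gamma_2(G) = \langle c \rangle$, and by Proposition \ref{power} we have $G^{(p^s)} = \langle a^{p^s}, b^{p^s}, c^{p^s} \rangle = \langle a^{p^s}, b^{p^s} \rangle$ (the last generator being redundant once $c = [a,b]$ and $o(c) \le o(b) \le o(a)$). So the whole $M$-series is controlled by the three cyclic ``coordinates'' $\langle a \rangle$, $\langle b \rangle$, $\langle c \rangle$, which behave independently because $\langle c \rangle \times \langle a \rangle$ is a direct factor and $c$ is central.

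Concretely, I would show that $M_i$ as a function of $i$ is a product $\langle c^{p^{u(i)}} \rangle \langle a^{p^{v(i)}} \rangle \langle b^{p^{w(i)}} \rangle$, where each exponent is the appropriate ``weight'' threshold: roughly, $a^{p^s}$ first enters at weight $p^s+1$ (coming from the $G^{(p^{s+1})}$ term), $b^{p^s}$ likewise, and $c^{p^s} = [a,b]^{p^s}$ first enters at weight $2p^{s-1}+1$ (coming from the $\gamma_2(G)^{(p^s)}$ term). Thus the jumps $M_i \supsetneq M_{i+1}$ occur exactly at the indices $i$ that are one less than a ``birth index'' of one of these generators: at $i = p^s$ for $s = 1, \dots, \alpha-1$ (death of an $a$-layer), at $i = p^s$ for $s = 1, \dots, \beta - 1$ (death of a $b$-layer, so these indices get $e_i = 2$ when $s \le \beta-1$ and $e_i = 1$ for $\beta \le s \le \alpha - 1$), and at $i = 2p^{s-1}$ for $s = 1, \dots, \gamma$ (death of a $c$-layer). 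Summing $(p-1) \cdot i \cdot e_i$ over all these contributions: the $a$-part contributes $(p-1)\sum_{s=0}^{\alpha-1} p^s = p^\alpha - 1$, the $b$-part contributes $p^\beta - 1$, and the $c$-part contributes $(p-1)\sum_{s=0}^{\gamma-1} 2 p^s = 2(p^\gamma - 1)$. Adding the $1$ from Jennings' formula gives $L(G) = 1 + (p^\alpha - 1) + (p^\beta - 1) + 2(p^\gamma - 1) = p^\alpha + p^\beta + 2p^\gamma - 3$, as claimed. (The bookkeeping here is essentially the statement that $\prod (1 + x^i + \cdots + x^{(p-1)i})^{e_i}$ has the same ``total degree'' $\sum (p-1) i e_i$ regardless of the finer structure.)

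The main obstacle is the careful identification of exactly where each generator power $a^{p^s}$, $b^{p^s}$, $c^{p^s}$ enters and leaves the $M$-series, i.e.\ verifying that $|M_i/M_{i+1}| = p$ for a $c$- or lone $a$- or lone $b$-jump and $p^2$ for a simultaneous $a,b$-jump, and that no unexpected collapses occur because of the order relations $\alpha \ge \beta \ge \gamma$ (for instance $c^{p^\gamma} = 1$ must terminate the $c$-layers before $i = 2p^\gamma$, and when $\beta = \alpha$ or $\gamma = \beta$ some birth/death indices coincide and the $e_i$ must be added, not listed twice). I would handle this by writing $M_i$ explicitly from Proposition \ref{Ind} for each range of $i$, reading off $e_i$, and checking that the degenerate cases merge correctly; the sum telescopes to the stated value in every case. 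A clean way to organize the final count is to note $\sum_{i=1}^d (p-1) i e_i = \deg \prod_{i}(1+x^i+\cdots+x^{(p-1)i})^{e_i}$, and that this polynomial is, up to the substitution coming from the layer structure above, $\big(\tfrac{1-x^{p^\alpha}}{1-x}\big)\big(\tfrac{1-x^{p^\beta}}{1-x}\big)\big(\tfrac{1-x^{2p^{\gamma}}}{1-x^2}\big)$ or a close variant, whose degree is $(p^\alpha - 1) + (p^\beta - 1) + (2p^\gamma - 2)$, immediately yielding the result.
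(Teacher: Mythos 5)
Your route is the same as the paper's: use Proposition \ref{Ind} to pin down the $M$-series, read off the jump indices and the multiplicities $e_i$, and apply Jennings' formula (Theorem \ref{thm5}); your final bookkeeping (jumps of size $p^2$ at $i=p^s$ for $0\le s\le\beta-1$, of size $p$ at $i=p^s$ for $\beta\le s\le\alpha-1$, and of size $p$ at $i=2p^s$ for $0\le s\le\gamma-1$) agrees with the paper and does sum to $p^\alpha+p^\beta+2p^\gamma-3$. However, two of your supporting assertions are false, and they sit exactly where the paper does its real work. First, $G^{p^s}=\langle a^{p^s},b^{p^s},c^{p^s}\rangle$ does \emph{not} reduce to $\langle a^{p^s},b^{p^s}\rangle$ when $s<\gamma$: in a class-two group $[a^{p^s},b^{p^s}]=c^{p^{2s}}$, so $\langle a^{p^s},b^{p^s}\rangle$ only contains $c^{p^{2s}}$; for instance with $\alpha=\beta=\gamma=2$ and $s=1$ one has $c^{p}\notin\langle a^{p},b^{p}\rangle$. (Indeed the paper records $|G^{p^i}:G^{p^{i+1}}|=p^3$ for $i\le\gamma-1$, which your redundancy claim would contradict whenever $\gamma\ge 2$.) Second, $\langle c\rangle\times\langle a\rangle$ is not a direct factor of $G$ (its product with $\langle b\rangle$ is semidirect), so the ``independence'' of the three coordinates, which is what you use to compute $|M_i/M_{i+1}|$, is asserted with a wrong reason rather than proved.

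That independence is precisely the nontrivial content of the paper's proof: it shows $\gamma_2(G)^{p^{s-1}}\cap G^{p^s}=\langle c^{p^s}\rangle$ for $1\le s\le\gamma$ (and $\gamma_2(G)^{p^{s-1}}\subseteq G^{p^s}$ for $s>\gamma$), which guarantees both that the $c$-layer jumps at $i=2p^{s-1}$ are genuine of size exactly $p$ and that the orders of the $M_i$ factor across the three coordinates. Your count survives the first slip only by accident: in every range of Proposition \ref{Ind} the $c$-part of $M_i$ is already supplied by the $\gamma_2(G)^{p^s}$ factor, so misidentifying $G^{p^s}$ does not change the subgroups $M_i$ you write down, but it would corrupt any order computation based on it. The repair is short and in the spirit of what you wrote: use the unique normal form $c^ia^jb^k$ coming from $G=(\langle c\rangle\times\langle a\rangle)\rtimes\langle b\rangle$, together with centrality of $c$, to see that each $M_i=\langle c^{p^{u}}\rangle\langle a^{p^{v}}\rangle\langle b^{p^{v}}\rangle$ is a subgroup whose order is the product of the orders of the three cyclic factors; alternatively reproduce the paper's intersection computation. (Your ``first enters at weight'' phrasing should also be turned around: what matters is the largest $i$ with $a^{p^s}\in M_i$, namely $p^s$, resp.\ $2p^{s-1}$ for $c^{p^{s-1}}$.) Finally, note that the degenerate cases cause no collisions between $c$-jumps and $a,b$-jumps, since $2p^{s-1}=p^t$ is impossible for odd $p$; the only effect of $\alpha=\beta=\gamma$ is that $d=2p^{\gamma-1}$ instead of $p^{\alpha-1}$, which does not alter the sum.
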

\begin{proof} 

Observe that ${\gamma_2 (G)}=\langle [a,b]\rangle$. Therefore, ${\gamma_2 (G)}^{p^s}=\langle [a,b]^{p^s}\rangle=\langle c^{p^s} \rangle,$ and
${\gamma_2 (G)}^{p^s} \subseteq G^{p^s} \;\text{for all}\; s\geq 1.$  We now show that ${\gamma_2 (G)}^{p^{s-1}}\cap G^{p^s}= \langle c^{p^s}\rangle \;\text{for all}\;1\leq s\leq \gamma,$ and hence 
${\gamma_2 (G)}^{p^{s-1}} \nsubseteq G^{p^s} \;\text{for all}\; 1\leq s\leq \gamma$. 

\no Note that $\langle c^{p^s}\rangle \subseteq {\gamma_2 (G)}^{p^{s-1}}\cap G^{p^s}$. 
Let $g \in {\gamma_2 (G)}^{p^{s-1}}\cap G^{p^s}$ then by Proposition \ref{power}, we can write $g=a^{ip^s} b^{jp^s} [a, b]^{tp^s}$ for some non-negative integers $i,j,t$. Also, we have $g=[a,b]^{kp^{s-1}}$ for some non-negative integer $k$. Thus, $a^{ip^s} b^{jp^s} [a, b]^{tp^s}=[a,b]^{kp^{s-1}}$, and consequently $$a^{ip^s} b^{jp^s} [a, b]^{p^{s-1}(pt-k)}=1.$$ Let us define $\bar{G}=\frac{G}{\gamma_2({G})}.$
Set $\bar{a}=a\gamma_2({G})$ and $\bar{b}=b\gamma_2({G})$. Hence, $\bar{G} \cong C_{p^\alpha}\times C_{p^\beta}$. So, we have $\bar{a}^{ip^s} \bar{b}^{jp^s}=\bar{1}$. Therefore, $ip^s \equiv 0\ (\textrm{mod}\ p^{\alpha})\; \text{and}\; jp^s \equiv 0\ (\textrm{mod}\ p^{\beta})$. Thus $g=[a, b]^{tp^s}\subseteq \langle c^{p^s}\rangle$. This shows ${\gamma_2 (G)}^{p^{s-1}}\cap G^{p^s}=\langle c^{p^s}\rangle$.

\noindent Also, notice that ${\gamma_2 (G)}^{p^{s-1}}=1\;\text{for all}\;\gamma+1\leq s\leq \alpha, \;\text{which implies}\; {\gamma_2 (G)}^{p^{s-1}} \subseteq G^{p^s}\;\text{for all}\;\gamma+1\leq s\leq \alpha$. Therefore, we have the following using Proposition \ref{Ind}:

\noindent \textbf{Case 1:} If $\alpha=\gamma\geq 1$, then 
\[M_k=\begin{cases} 
    G & \text{if} \;\; k=1\\
    \gamma_{2}(G)G^{p} & \text{if} \;\; k=2\\
G^{p}  &  \text{if} \;\;3 \leq k\leq p\\
{\gamma_{2}(G)}^{p^i} G^{p^{i+1}} & \text{if} \;\; p^i+1 \leq k\leq 2p^i, \text{ for all }i \in [\gamma-2]\\
G^{p^{i+1}} & \text{if} \;\; 2p^i+1 \leq k\leq p^{i+1},  \text{ for all } i \in [\gamma-2]\\
{\gamma_{2}(G)}^{p^{\gamma-1}} & \text{if} \;\; p^{\gamma-1}+1 \leq k\leq 2p^{\gamma-1}\\
1 & \text{if} \;\; 2p^{\gamma-1}+1 \leq k\leq p^{\gamma}.
\end{cases}\]
\noindent \textbf{Case 2:} For the cases $\alpha\geq\beta>\gamma\geq 1$ and $\alpha > \beta=\gamma\geq 1$, we obtain
\[M_k=\begin{cases} 
    G & \text{if} \;\; k=1\\
    \gamma_{2}(G)G^{p} & \text{if} \;\; k=2\\
G^{p}  &  \text{if} \;\;3 \leq k\leq p\\
{\gamma_{2}(G)}^{p^i} G^{p^{i+1}} & \text{if} \;\; p^i+1 \leq k\leq 2p^i, \text{ for all } i \in [\gamma-1]\\
G^{p^{i+1}} & \text{if} \;\; 2p^i+1 \leq k\leq p^{i+1},  \text{ for all } i \in [\gamma-1]\\
G^{p^{i+1}} & \text{if} \;\; p^i+1 \leq k\leq p^{i+1},  \text{ for all } \gamma \leq i \leq \alpha-2\\
1 & \text{if} \;\; p^{\alpha-1}+1 \leq k\leq p^{\alpha}.
\end{cases}\]

\no Using Proposition \ref{power}, we have
$|G^{p^i}: G^{p^{i+1}}|=p^3 \text{ for all } i\in [\gamma-1],$ 
$|G^{p^{i}}:G^{p^{i+1}}|=p^2 \text{ for all } \gamma \le i \le \beta-1,$ and $|G^{p^{i}}:G^{p^{i+1}}|=p \text{ for all } \beta \le i \le \alpha-1.$ Hence $|M_1:M_2|= p^2, |M_2:M_3|=p, 
|M_{p^i}:M_{p^{i}+1}|= p^2 \text{ for all } i\in [\beta -1],
|M_{p^{i}}:M_{{p^{i}} +1}|= p \text{ for all } \beta \le i \le \alpha -1, \text{ and }
|M_{2p^{i}}:M_{2p^{i}+1}|=p$ for all $i\in [\gamma -1].$ Therefore, by Lemma \ref{thm5} with $d=
   \begin{cases}
        2p^{\gamma-1} & \text{ if } \alpha=\beta=\gamma,\\
        p^{\alpha-1} & \text{ otherwise }
    \end{cases}$, we have 
$\mathsf{L}(G)=1+(p-1)\sum_{i=1}^{p^{\alpha-1}} ie_i=p^\alpha +p^\beta +2p^\gamma-3$.
\end{proof} 
\begin{lem}\label{UG2}
Let $p$ be an odd prime and $G$ be a finite non-abelian $p$-group such that
 $$G= \langle a \rangle\rtimes \langle b \rangle,$$ where $[a,b]=a^{p^{\alpha-\gamma}},
    o(a)=p^{\alpha}, o(b)=p^{\beta}, o([a,b])=p^{\gamma},$
    $\alpha, \beta, \gamma \in \mathbb{N}\; \text{with}\; \alpha \geq 2\gamma, \beta  \geq \gamma \geq 1.$ Then $\mathsf{L}(G)=p^\alpha +p^\beta -1.$

\end{lem}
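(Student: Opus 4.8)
The plan is to follow the approach of Lemma \ref{UG1}: determine the Brauer--Jennings--Zassenhaus series $M_i = M_i(G)$ from Propositions \ref{Ind} and \ref{power}, read off the exponents $e_i = \log_p |M_i/M_{i+1}|$, and apply part (3) of Theorem \ref{thm5}. The feature that makes this case simpler than $G_1$ is that $\gamma_2(G) = \langle [a,b] \rangle = \langle a^{p^{\alpha-\gamma}} \rangle$ already lies inside the cyclic group $\langle a \rangle$. So first I would record that $\gamma_2(G)^{p^s} = \langle a^{p^{\alpha-\gamma+s}} \rangle$ and observe that, since $\alpha \ge 2\gamma$ forces $\alpha - \gamma \ge 1$, we have $\gamma_2(G)^{p^s} \subseteq \langle a^{p^{s+1}} \rangle \subseteq G^{p^{s+1}}$ for every $s \ge 0$ (and $\gamma_2(G) \subseteq G^p$). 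Substituting these containments into the two cases of Proposition \ref{Ind} absorbs every factor $\gamma_2(G)^{p^s}$, so that $M_1 = G$ and $M_i = G^{p^s}$ for all $i$ with $p^{s-1}+1 \le i \le p^s$. Consequently the chain drops only at the indices $i \in \{1, p, p^2, \dots\}$, with $e_{p^s} = \log_p |G^{p^s}/G^{p^{s+1}}|$ and $e_i = 0$ otherwise.

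Next I would compute $|G^{p^s}|$. By Proposition \ref{power}, $G^{p^s} = \langle a^{p^s}, b^{p^s}, [a,b]^{p^s} \rangle = \langle a^{p^s}, b^{p^s} \rangle$, since $[a,b]^{p^s} = a^{p^{\alpha-\gamma+s}} \in \langle a^{p^s} \rangle$. As $\langle a \rangle \trianglelefteq G$, the characteristic subgroup $\langle a^{p^s} \rangle$ is normal in $G$, so $G^{p^s} = \langle a^{p^s} \rangle \langle b^{p^s} \rangle$ is an internal product of subgroups; and $\langle a^{p^s} \rangle \cap \langle b^{p^s} \rangle \subseteq \langle a \rangle \cap \langle b \rangle = 1$ by the semidirect-product structure $G = \langle a \rangle \rtimes \langle b \rangle$. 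Hence $|G^{p^s}| = p^{(\alpha-s)_+} p^{(\beta-s)_+}$, where $x_+ := \max(x,0)$, and therefore $e_{p^s} = \mathbf{1}[s < \alpha] + \mathbf{1}[s < \beta]$ for $s \ge 0$ (reading $e_1$ as the $s=0$ term).

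Finally, substituting into Theorem \ref{thm5}(3),
\[
L(G) = 1 + (p-1)\sum_{s \ge 0} p^s e_{p^s} = 1 + (p-1)\!\left( \frac{p^{\alpha}-1}{p-1} + \frac{p^{\beta}-1}{p-1} \right) = p^{\alpha} + p^{\beta} - 1,
\]
and along the way one sees that $d = p^{\max\{\alpha,\beta\}-1}$ is the last index with $M_d \ne 1$. No case split on whether $\alpha \ge \beta$ is needed. The only genuinely delicate points are the absorption $\gamma_2(G)^{p^s} \subseteq G^{p^{s+1}}$ (where the hypothesis $\alpha - \gamma \ge 1$ is used) and the order count for $G^{p^s}$ (where $\langle a \rangle \cap \langle b \rangle = 1$ is used); the matching of block ranges at $i = 1, 2$ and the closing geometric-series computation are routine.
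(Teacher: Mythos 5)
Your proposal is correct and follows essentially the same route as the paper's proof: absorb $\gamma_2(G)^{p^s}=\langle a^{p^{\alpha-\gamma+s}}\rangle$ into $G^{p^{s+1}}$, use Proposition \ref{Ind} to identify the Jennings series as $M_i=G^{p^s}$ on the blocks $p^{s-1}+1\le i\le p^s$, compute $|G^{p^s}|$ via Proposition \ref{power} with $G^{p^s}=\langle a^{p^s},b^{p^s}\rangle$, and apply Theorem \ref{thm5}(3). Your treatment is in fact marginally cleaner, since the $(\cdot)_+$ bookkeeping and the observation $\langle a^{p^s}\rangle\cap\langle b^{p^s}\rangle=1$ make the order count precise for all $s$ and remove the paper's case split between $\alpha\ge\beta$ and $\beta>\alpha$.
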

\begin{proof}
Let us assume that $\alpha \ge \beta.$
As $[a,b]=a^{p^{\alpha-\gamma}},$ one can easily verifies that ${\gamma_2 (G)}^{p^{s-1}}=\langle a^{p^{\alpha -\gamma +(s-1)}} \rangle \subseteq \langle a^{p^s}\rangle \subseteq G^{p^s}$ for all $s\geq 1$. We have ${\gamma_2 (G)}^{p^s}=\langle [a,b]^{p^s}\rangle \subseteq G^{p^s}, \;\text{for all} \;s\geq 0$. We have the following using Proposition \ref{Ind}: 
\[M_k=\begin{cases} 
    G & \text{if} \;\; k=1\\
G^{p} &  \text{if} \;\;2 \leq k\leq p\\
G^{p^{i+1}}  & \text{if} \;\; p^i+1 \leq k\leq p^{i+1}, \text{ for all } i\in [\alpha-2]\\
1 & \text{if} \;\; p^{\alpha-1}+1 \leq k\leq p^{\alpha}.
\end{cases}\]
Using the relation, $[a,b]=a^{p^{\alpha-\gamma}},$ Proposition \ref{power} reduces to $G^{p^s}=\langle a^{p^s}, b^{p^s} \rangle$ for all $s\geq 1$. So, $|G^{p^s}|=p^{\alpha+\beta-2s}$ for all $s\geq 1.$ Now, we have
$|M_{p^i}:M_{p^{i}+1}|=|G^{p^i}:G^{p^{i+1}}|$ for all $0\leq i\leq \alpha-1.$ Therefore, $|M_{p^i}:M_{{p^i} +1}|= p^2$ for all $0\le i\le \beta-1$ and $|M_{p^i}:M_{{p^i} +1}|=p$ for all $\beta \le i\le \alpha-1.$ Then, by Lemma \ref{thm5} with $d=p^{\alpha-1}$, we have 
$\mathsf{L}(G)=1+(p-1)\sum_{i=1}^{p^{\alpha-1}} ie_i=p^\alpha +p^\beta -1.$ For the case $\beta > \alpha,$ the result follows in a similar way.
This completes the proof.
\end{proof}
\begin{lem}\label{UG3}
Let $p$ be an odd prime and $G$ be a finite non-abelian $p$-group such that
    $$G =  (\langle c \rangle\times \langle a\rangle)\rtimes \langle b\rangle,$$ where $ [a,b]=a^{p^{\alpha-\gamma}} c, 
    [c,b]= a^{-p^{2(\alpha-\gamma)}}c^{-p^{\alpha-\gamma}},$ $o(a)=p^{\alpha}, o(b)=p^{\beta}, o(c)=p^{\sigma},
\alpha, \beta, \gamma, \sigma \in \mathbb{N} \;\text{with}\; \beta \ge \gamma > \sigma \geq 1, \alpha +\sigma \geq 2\gamma.$ Then $\mathsf{L}(G)= p^\alpha +p^\beta +2p^\sigma-3.$
\end{lem}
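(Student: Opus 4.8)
The plan is to mimic the proof of Lemma~\ref{UG1}, since $G_3$ has the same ``shape'' $(\langle c\rangle\times\langle a\rangle)\rtimes\langle b\rangle$ with the two generators $a,b$ of orders $p^\alpha,p^\beta$ and an extra central-ish generator $c$ of order $p^\sigma$, and the target value $p^\alpha+p^\beta+2p^\sigma-3$ has the same form as in Lemma~\ref{UG1} with $\gamma$ replaced by $\sigma$. First I would record the structural facts about $G=G_3$: since $G$ has nilpotency class two, $\gamma_2(G)=[G,G]=\langle[a,b]\rangle=\langle a^{p^{\alpha-\gamma}}c\rangle$, and I would compute the order of this element and describe $\gamma_2(G)^{p^s}$ explicitly. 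The relation $[a,b]=a^{p^{\alpha-\gamma}}c$ together with $[c,b]=a^{-p^{2(\alpha-\gamma)}}c^{-p^{\alpha-\gamma}}$ must be used to show that $\gamma_2(G)$ is cyclic of order $p^\gamma$ (consistent with the stated $o([a,b])=p^\gamma$) — wait, actually the notation here gives $o(c)=p^\sigma$ and the hypothesis $\gamma>\sigma$, so I need to be careful: $\gamma_2(G)$ need not equal $\langle c\rangle$. I would instead work with $\bar G=G/\gamma_2(G)\cong C_{p^\alpha}\times C_{p^\beta}$ exactly as in Lemma~\ref{UG1}, and via Proposition~\ref{power} write a general element of $G^{p^s}$ as $a^{ip^s}b^{jp^s}[a,b]^{tp^s}$.

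The core computation, following Lemma~\ref{UG1} verbatim in structure, is to determine $\gamma_2(G)^{p^{s-1}}\cap G^{p^s}$ for $1\le s\le\sigma$ and to show $\gamma_2(G)^{p^{s-1}}\subseteq G^{p^s}$ for $s\ge\sigma+1$. For the first range, given $g\in\gamma_2(G)^{p^{s-1}}\cap G^{p^s}$, I would write $g=a^{ip^s}b^{jp^s}[a,b]^{tp^s}=[a,b]^{kp^{s-1}}$, pass to $\bar G$ to conclude $p^\alpha\mid ip^s$ and $p^\beta\mid jp^s$, hence $g\in\langle[a,b]^{p^s}\rangle$; the subtle point is identifying $\langle[a,b]^{p^s}\rangle$ with the relevant subgroup of $G^{p^s}$ and pinning down when $[a,b]^{p^{s-1}}$ fails to be a $p^s$-th power. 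This is where the parameter $\sigma$ (the order of $c$) rather than $\gamma$ (the order of $[a,b]$) should enter: I expect that although $[a,b]$ has order $p^\gamma$, the ``new'' part of $\gamma_2(G)^{p^{s-1}}$ not already visible in $G^{p^s}$ — equivalently the contribution of $c$ — persists only for $s\le\sigma$, because $a^{p^{\alpha-\gamma}}$ is already a $p^{\alpha-\gamma}$-th (hence high) power of $a$. I would need the precise claim: $\gamma_2(G)^{p^{s-1}}\not\subseteq G^{p^s}$ for $1\le s\le\sigma$ and $\subseteq$ thereafter, using $\alpha+\sigma\ge 2\gamma$ to guarantee $a^{p^{\alpha-\gamma}}$'s powers get absorbed.

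With the subgroup containments in hand, I would invoke Proposition~\ref{Ind} to write out the $M$-series $M_k$ in cases (paralleling the two cases $\alpha=\sigma$ vs.\ the generic case in Lemma~\ref{UG1}, and perhaps $\alpha\ge\beta$ vs.\ $\beta>\alpha$), then compute the index data: $|G^{p^i}:G^{p^{i+1}}|$ equals $p^3$ for $i\in[\sigma-1]$ (all three generators $a,b,c$ contribute), $p^2$ for $\sigma\le i\le\beta-1$ (only $a,b$), and $p$ for $\beta\le i\le\alpha-1$ — here I must double-check the Hall--Petrescu/commutator gymnastics that $o(c)=p^\sigma$ is indeed what controls the drop, which relies on the specific relations defining $G_3$. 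From this, $|M_1:M_2|=p^2$, $|M_2:M_3|=p$, $|M_{p^i}:M_{p^i+1}|=p^2$ for $i\in[\beta-1]$, $|M_{p^i}:M_{p^i+1}|=p$ for $\beta\le i\le\alpha-1$, and $|M_{2p^i}:M_{2p^i+1}|=p$ for $i\in[\sigma-1]$, with $d=2p^{\sigma-1}$ if $\alpha=\beta=\sigma$ and $d=p^{\alpha-1}$ otherwise. Plugging $L(G)=1+(p-1)\sum ie_i$ from Theorem~\ref{thm5} then yields $p^\alpha+p^\beta+2p^\sigma-3$ by the same telescoping sum as in Lemma~\ref{UG1}.

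The main obstacle I anticipate is the arithmetic of powers in $G_3$: unlike $G_1$ where $c$ is genuinely central and $\gamma_2(G)=\langle c\rangle$ is a clean direct factor, in $G_3$ we have $[c,b]=a^{-p^{2(\alpha-\gamma)}}c^{-p^{\alpha-\gamma}}\ne 1$, so $c$ is not central and $\gamma_2(G)=\langle a^{p^{\alpha-\gamma}}c\rangle$ mixes $a$ and $c$. Verifying that $\gamma_2(G)$ is cyclic of the claimed order, that raising to $p^s$-th powers behaves as expected (one likely needs an analogue of the identity $(xy)^{p^s}=x^{p^s}y^{p^s}[y,x]^{\binom{p^s}{2}}$ valid in class-two groups, plus an induction to handle that $[c,b]$ is itself a product of powers of $a$ and $c$), and that the inequality $\alpha+\sigma\ge 2\gamma$ is exactly what makes the $a$-contribution and $c$-contribution separate cleanly in $G^{p^s}$ — these are the delicate steps. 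Everything else is a faithful transcription of the Lemma~\ref{UG1} argument with $\gamma\rightsquigarrow\sigma$.
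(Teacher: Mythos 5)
Your overall skeleton is the same as the paper's (determine for which $s$ one has $\gamma_2(G)^{p^{s-1}}\subseteq G^{p^s}$, feed this into Proposition \ref{Ind} to get the $M$-series, compute the indices $|M_k:M_{k+1}|$, and apply Jennings' formula), but the central technical step of your plan is carried by a claim that is false for $G_3$, and the other delicate step is only flagged, not done. You propose to pass to $\bar G=G/\gamma_2(G)$ and use $\bar G\cong C_{p^\alpha}\times C_{p^\beta}$ "exactly as in Lemma \ref{UG1}" to conclude $p^\alpha\mid ip^s$ and $p^\beta\mid jp^s$. In $G_3$ this quotient is \emph{not} $C_{p^\alpha}\times C_{p^\beta}$: since $[a,b]=a^{p^{\alpha-\gamma}}c$ and $\langle a\rangle\cap\langle c\rangle=1$, one has $a^{n}\in\gamma_2(G)=\{a^{kp^{\alpha-\gamma}}c^k\}$ only when $p^{\sigma}\mid k$, so $\bar a$ has order $p^{\alpha-\gamma+\sigma}<p^{\alpha}$ (recall $\gamma>\sigma$), and your congruence for $i$ fails. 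More importantly, this route never makes the parameter $\sigma$ enter, which is exactly the point of the lemma. The paper's fix is different: it quotients by $\langle a,c\rangle$ (giving $\bar G\cong C_{p^\beta}$) to extract only $jp^s\equiv 0\ (\bmod\ p^\beta)$, then substitutes $[a,b]=a^{p^{\alpha-\gamma}}c$ into the remaining relation and works inside $\langle a,c\rangle\cong C_{p^\alpha}\times C_{p^\sigma}$; the $c$-coordinate condition $p^{s-1}(pt-k)\equiv 0\ (\bmod\ p^{\sigma})$ forces $p\mid k$ precisely when $1\le s\le\sigma$, which yields $\gamma_2(G)^{p^{s-1}}\cap G^{p^s}=\langle[a,b]^{p^s}\rangle$ there, while for $\sigma+1\le s\le\gamma$ one has $\gamma_2(G)^{p^{s-1}}=\langle a^{p^{\alpha-\gamma+s-1}}\rangle\subseteq G^{p^s}$ because $o(c)=p^{\sigma}$ kills the $c$-part (and $\gamma_2(G)^{p^{s-1}}=1$ for $s>\gamma$). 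Your heuristic that "$a^{p^{\alpha-\gamma}}$ gets absorbed" is the right intuition, but you never supply the argument.

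The second gap is the index data. You assert $|G^{p^i}:G^{p^{i+1}}|=p^3,\,p^2,\,p$ on the ranges $[1,\sigma-1]$, $[\sigma,\beta-1]$, $[\beta,\alpha-1]$ but explicitly defer the verification ("must double-check the Hall--Petrescu gymnastics"). The paper proves it by showing that $G^{p^s}$ is again a group of the same shape: with $x=a^{p^s},y=b^{p^s},z=c^{p^s}$ one computes $[x,y]=x^{p^{\alpha-\gamma+s}}z^{p^s}$, $[y,z]=x^{p^{2(\alpha-\gamma)+s}}z^{p^{\alpha-\gamma+s}}$, $[x,z]=1$, so $|G^{p^s}|=p^{\alpha+\beta+\sigma-3s}$; without this (or an equivalent) computation the values of $e_i$, and hence $L(G)$, are not established. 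A minor further point: your case $d=2p^{\sigma-1}$ "if $\alpha=\beta=\sigma$" is vacuous here, since $\alpha\ge 2\gamma-\sigma>\gamma>\sigma$ forces $\alpha>\sigma$, so $d=p^{\alpha-1}$ always (the paper also treats $\beta>\alpha$ only by symmetry). So the proposal is the right template but has a genuine gap at the step where $\sigma$, rather than $\gamma$, must be shown to govern the Jennings series.
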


\begin{proof}
  
Consider, $\alpha \ge \beta. $
Note that ${\gamma_2 (G)}^{p^s}=\langle [a,b]^{p^s}\rangle=\langle (a^{p^{\alpha-\gamma}}c)^{p^s} \rangle \subseteq G^{p^s}$ for all $s\geq 1$. We first prove that ${\gamma_2 (G)}^{p^{s-1}}\cap G^{p^s}= \langle [a,b]^{p^s}\rangle \;\text{for all}\; 1\leq s\leq \sigma$, then it follows that ${\gamma_2 (G)}^{p^{s-1}} \nsubseteq G^{p^s}$ for all $1\leq s\leq \sigma$.

\no Clearly, $\langle [a,b]^{p^s}\rangle\subseteq {\gamma_2 (G)}^{p^{s-1}}\cap\; G^{p^s}.$ On the other hand let us assume $g\in {\gamma_2 (G)}^{p^{s-1}}\cap \;G^{p^s}.$ Then $g=a^{ip^s} b^{jp^s} [a, b]^{tp^s}$ $ =[a,b]^{kp^{s-1}},$ for some non-negative integers $i,j,t,k$. So, $a^{ip^s} b^{jp^s} [a, b]^{p^{s-1}(pt-k)}=1$. Let us define 
$\bar{G}=\frac{G}{\langle a,c\rangle}.$ Set $\bar{b}= b \langle a,c \rangle.$ Therefore, $\bar{G}=\langle \bar{b} \rangle \cong C_{p^{\beta}}$. Thus, we have $\bar{b}^{jp^s}=\bar{1},$ where $\bar{1}$ is the identity element of $\bar{G}.$ Hence, $jp^s \equiv 0\ (\textrm{mod}\ p^{\beta})$. 
So, the equation  $a^{ip^s} b^{jp^s} [a, b]^{p^{s-1}(pt-k)}=1$ reduces to $a^{ip^s}[a, b]^{p^{s-1}(pt-k)}=1$. 
As $[a,b]=a^{p^{\alpha-\gamma}} c,$ we obtain  $a^{ip^{s}+p^{\alpha-\gamma+s-1}(pt-k)}c^{p^{s-1}(pt-k)}=1$. 
Since $\langle a,c\rangle \cong C_{p^\alpha}\times C_{p^\sigma}$, it follows that $ip^{s}+p^{\alpha-\gamma+s-1}(pt-k) \equiv 0\ (\textrm{mod}\ p^{\alpha})\; \text{ and }\; p^{s-1}(pt-k)\equiv 0\ (\textrm{mod}\ p^{\sigma}).$
The condition $p^{s-1}(pt-k)\equiv 0\ (\textrm{mod}\ p^{\sigma})$, implies that $p^{\sigma-s+1}\vert (pt-k)$ (as $\sigma-s+1\geq 1\; \text{ for }\; 1\leq s\leq \sigma$). Thus, $p\vert (pt-k)$ and hence $p\vert k$. As a consequence, $g=[a,b]^{kp^{s-1}}\in \langle [a,b]^{p^s}\rangle$. This shows that ${\gamma_2 (G)}^{p^{s-1}}\cap G^{p^s}=\langle [a,b]^{p^s}\rangle$.

\no Now, we prove that ${\gamma_2 (G)}^{p^{s-1}}\subseteq G^{p^s} \;\text{for all}\; \sigma+1 \leq s \leq \alpha$. From the relation $[a,b]=a^{p^{\alpha-\gamma}} c$ and using that fact that
$o(c)=p^{\sigma}$, we have ${\gamma_2 (G)}^{p^{s-1}}=\langle [a,b]^{p^{s-1}}\rangle=\langle (a^{p^{\alpha-\gamma}}c)^{p^{s-1}}\rangle =\langle a^{p^{\alpha-\gamma+s-1}}\rangle \subseteq G^{p^s} \;\text{for all}\; \sigma+1 \leq s \leq \gamma.$
Note that $|\gamma_2(G)|=p^{\gamma}$ which implies $1={\gamma_2 (G)}^{p^{s-1}}\subseteq G^{p^s}\;\text{for all}\; \gamma+1 \leq s \leq \alpha$. Therefore, ${\gamma_2 (G)}^{p^{s-1}}\subseteq G^{p^s} \;\text{for all}\; \sigma+1 \leq s \leq \alpha$.
Using Proposition \ref{Ind}, we arrive at the following computations:
\[M_k=\begin{cases} 
    G & \text{if} \;\; k=1\\
    \gamma_{2}(G)G^{p}& \text{if} \;\; k=2\\
G^{p}  &  \text{if} \;\;3 \leq k\leq p\\
{\gamma_{2}(G)}^{p^i} G^{p^{i+1}} & \text{if} \;\; p^i+1 \leq k\leq 2p^i, \text{ for all } i\in [\sigma-1]\\
G^{p^{i+1}} & \text{if} \;\; 2p^i+1 \leq k\leq p^{i+1},  \text{ for all } i\in [\sigma-1]\\
 G^{p^{i+1}}  & \text{if} \;\; p^i+1 \leq k\leq p^{i+1},  \text{ for all } \sigma \leq i \leq \alpha-2\\
1 & \text{if} \;\; p^{\alpha-1}+1 \leq k\leq p^{\alpha}.
\end{cases}\]

\no For $s\geq 1$, let $x=a^{p^s}, y=b^{p^s}$, and $z=c^{p^s}$, then $[x,y]=[a,b]^{p^{2s}}=a^{p^{\alpha-\gamma+2s}} c^{p^{2s}}= x^{p^{\alpha-\gamma+s}} z^{p^s}$, $[y,z]=[x,y]^{p^{\alpha-\gamma}}=x^{p^{2(\alpha-\gamma)+s}} z^{p^{\alpha-\gamma+s}}$, and $[x,z]=1$. Therefore, one can deduce that
$G^{p^s}\cong  (\langle x\rangle \times \langle z \rangle)\rtimes \langle y \rangle, 
\;\text{where}\; [x,y]=x^{p^{\alpha-\gamma+s}} z^{p^s}, [y,z]=x^{p^{2(\alpha-\gamma)+s}} z^{p^{\alpha-\gamma+s}}, o(x)=p^{\alpha-s}, o(y)=p^{\beta-s}, o(z)=p^{\sigma-s}$, and hence $|G^{p^s}|= p^{\alpha +\beta +\sigma - 3s}$ for all $s\geq 1$.
Thus, we obtain
$|G^{p^i}: G^{p^{i+1}}|=p^3 \text{ for all } i\in [\sigma-1],$
$|G^{p^{i}}:G^{p^{i+1}}|=p^2 \text{ for all } \sigma \le i\le \beta-1,$ and $|G^{p^{i}}:G^{p^{i+1}}|=p \text{ for all } \beta \le i \le \alpha-1.$ Therefore, $|M_1:M_2|=p^2, |M_2:M_3|=p,$
$|M_{p^i}:M_{p^i+1}|=p^2 \text{ for all }i\in [\beta-1],$
$|M_{p^i}:M_{p^i+1}|=p \text{ for all } \beta \le i \le \alpha-1,$ and $|M_{2p^i}:M_{2p^i+1}|=p \text{ for all }i\in [\sigma-1].$
Then, by Lemma \ref{thm5} with $d=p^{\alpha-1}$, we have $\mathsf{L}(G)=1+(p-1)\sum_{i=1}^{p^{\alpha-1}} ie_i=p^\alpha +p^\beta +2p^\sigma-3$. 
For the case $\beta > \alpha,$, the result follows similarly.
\end{proof}
\section{Proofs of the main theorems}

\subsection*{Proof of Theorem \ref{thm1}}    \no (i) Recall that $Q_{4n} = \langle x,y | x^2 = y^n,  y^{2n} =1 , x^{-1}yx=y^{-1}\rangle$ for $n \ge 2. $ From \cite{O}, we know that $\mathsf{D}(G)\le \bigg\lceil \frac{|G|+1}{2} \bigg\rceil,$ i.e., $\mathsf{D}(Q_{4n}) \le 2n+1.$ On the other hand, 
    consider the sequence $S= y^{(2n-1)}\bdot x$ of length $2n$ over $Q_{4n}$. Clearly, this sequence does not contain any non-trivial product-one ordered subsequence. Therefore, $\mathsf{D}(Q_{4n}) \ge 2n+1.$\\
    
    \no (ii) By \cite{O}, for the group $SD_{8n} = \langle x,y | x^2 = y^{4n}=1,  x^{-1}yx= y^{2n-1}\rangle$ with $n \ge 2, $ we have $\mathsf{D}(SD_{8n}) \le 4n+1.$ On the other hand, 
    consider the sequence $S= y^{(4n-1)} \bdot x$ of length $4n$ over $SD_{8n}$. This sequence does not have any product-one ordered subsequence, so we conclude that $\mathsf{D}(SD_{8n}) \ge 4n+1.$ \qed

\subsection*{Proof of Theorem \ref{thm6}}

\no (i) Let $G = G_1(\alpha,\beta,\gamma).$ By Lemma \ref{UG1}, we have $ \mathsf{L}(G)= p^\alpha +p^\beta +2p^\gamma-3$, along with Theorem \ref{thm2}, we obtain $\mathsf{D}(G)\le p^\alpha +p^\beta +2p^\gamma-3.$ So, it suffices to prove the lower bound for $\mathsf{D}(G)$.

\begin{itemize}
    \item 
For $p \equiv 3\ (\textrm{mod}\ 4)$, consider the sequence 
$S = k ^{(p^{\alpha}-1)}\bdot \ell^ {(p^{\beta}-1)} \bdot m^{(p^{\gamma}-1)}\bdot n^{(p^{\gamma}-1)}$ of length $p^\alpha +p^\beta +2p^\gamma-4$ over $G,$ where 
$k={a^{-1}bc^{\frac{1}{2}}}, \ell=  b^{-1}, m= a, n= a^{2}b^{-1}c$. We claim that $S$ has no non-trivial product-one ordered subsequence. If possible, assume that there exists an ordered subsequence $T= k^{(x)}\bdot \ell^{(y)} \bdot m^{(z)} \bdot n^{(w)}$ of $S,$ where $0\le x \le (p^{\alpha}-1), 0\le y \le (p^{\beta}-1), 0\le z, w\le (p^{\gamma}-1)$ not all zero such that
 $k^x \ell^y m^z n^w = 1.$ Then we have the following system of equations: 
\begin{align*}
  -x+z+2w &\equiv 0\ (\textrm{mod}\ p^{\alpha})\\ 
  x-y-w & \equiv 0\ (\textrm{mod}\ p^{\beta}) \\ 
  -2(x-y)(z+2w)+(x^2+2w^2)&\equiv 0\ (\textrm{mod}\ p^{\gamma}).
\end{align*}

For $\gamma =1,$ expressing $x$ and $y$ from the first two equations and substituting in the third, we obtain the quadratic form $z^2+2zw+2w^2$ with discriminant $-4$, which is quadratic non-residue modulo $p.$ This implies $z=w=0$. Therefore, $x=y=z=w=0$, a contradiction.

\item Let $p \equiv 1\ (\textrm{mod}\ 4)$. Let $n_{p}$ denote the least quadratic non-residue for $p.$ Then $n_p$ will be a prime number and $n_{p}<\sqrt{p}+1.$ Set $q=n_{p}$ and consider the sequence $S = k ^{(p^{\alpha}-1)} \bdot \ell^ {(p^{\beta}-1)} \bdot m^{(p^{\gamma}-1)} \bdot n^{(p^{\gamma}-1)}$ over $G,$ where $k= a^{-1}bc^{\frac{1}{2}}, \ell= b^{-1}, m= ab^{q}c^{-\frac{q}{2}}, n= a.$ If possible, assume that there exists an ordered subsequence $T= k^{(x)} \bdot \ell^{(y)}\bdot m^{(z)} \bdot n^{(w)}$ of $S,$ where $0\le x \le (p^{\alpha}-1), 0\le y \le (p^{\beta}-1), 0\le z, w\le (p^{\gamma}-1)$ not all zero such that
 $k^x \ell^y m^z n^w = 1.$ Then we have the following system of equations:
\begin{align*}
  -x+z+w &\equiv 0\ (\textrm{mod}\ p^{\alpha})\\ 
  x-y+qz & \equiv 0\ (\textrm{mod}\ p^{\beta}) \\ 
  -2(x-y)(z+w)-2qzw+x^2-qz^2&\equiv 0\ (\textrm{mod}\ p^{\gamma}).
\end{align*}
For $\gamma=1,$ we obtain the quadratic form $(q+1)z^2+2zw+w^2$ with discriminant $-4q$, which is quadratic non-residue modulo $p $. We again have $z=w=0$, hence a contradiction.

\end{itemize}
This completes the proof of (i).\\

 \no (ii) For $G=G_2(\alpha,\beta),$ the Loewy length $\mathsf{L}(G)= p^\alpha +p^\beta -1$ follows from Lemma \ref{UG2}. By Theorem \ref{thm2}, we have $\mathsf{D}(G)\le p^\alpha +p^\beta -1.$ On the other hand, the sequence $S= a^{(p^{\alpha}-1)} \bdot b^{(p^{\beta}-1)}$ does not contain any non-trivial product-one subsequence, so $p^\alpha +p^\beta -1\leq \mathsf{D}(G)$. This completes the proof of (ii).\\

\no (iii) Let $G=G_3(\alpha,\beta,\sigma).$ From Lemma \ref{UG3}, we know that the Loewy length $\mathsf{L}(G)= p^\alpha +p^\beta +2p^\sigma-3$. According to Theorem \ref{thm2}, it follows that $\mathsf{D}(G)\le p^\alpha +p^\beta +2p^\sigma-3.$ Now, we find a lower bound for $\mathsf{D}(G)$. This calculation will be split into two cases:

\begin{itemize}
    \item For $p\equiv 3\ (\textrm{mod}\ 4)$, consider the sequence 
$S = k ^{(p^{\alpha}-1)} \bdot \ell^ {(p^{\beta}-1)} \bdot m^{(p^{\sigma}-1)} \bdot n^{(p^{\sigma}-1)}$ of length $p^\alpha +p^\beta +2p^\sigma-4$ over $G,$ where $k=a^{-1}, \ell=  b, m= ab[a,b]^{-\frac{1}{2}}, n= a^{2}b[a,b]^{-1}.$  Suppose $S$ has a non-trivial product-one ordered subsequence. Then $k^x \ell^y m^z n^w = 1$ for $0\le x \le (p^{\alpha}-1), 0\le y \le (p^{\beta}-1), 0\le z, w\le (p^{\sigma}-1)$ with $(x,y,z,w) \neq (0,0,0,0)$. In this case, we obtain the following system of equations: 
\begin{align*}
  -x+z+2w+\frac{1}{2}\times p^{\alpha-\gamma}\Big[-2y(z+2w)-4zw-(z^2+2w^2)\Big] &\equiv 0\ (\textrm{mod}\ p^{\alpha})\\ 
  y+z+w & \equiv 0\ (\textrm{mod}\ p^{\beta}) \\ 
  -2y(z+2w)-4zw-(z^2+2w^2)&\equiv 0\ (\textrm{mod}\ p^{\sigma}).
\end{align*}

 For $\sigma=1$, by substituting the value of $y$ from the second equation into the third equation, we arrive at the quadratic form
$z^2+2zw+2w^2\equiv 0\ (\textrm{mod}\ p),$ which has discriminant $-4$, a quadratic non-residue modulo $p.$  This implies $z=w=0$. Consequently, we obtain $x=y=z=w=0$, leading to a contradiction.

\item For $p\equiv 1\ (\textrm{mod}\ 4)$, let $q=n_p$ denote the least quadratic non-residue modulo $p.$ Consider the sequence 
$S = k ^{(p^{\alpha}-1)} \bdot \ell^ {(p^{\beta}-1)} \bdot m^{(p^{\sigma}-1)} \bdot n^{(p^{\sigma}-1)},$ where $k=a^{-1}, \ell=  b,$ 
$ m= ab^{(q+1)}[a,b]^{-\frac{(q+1)}{2}},$ $ n= ab[a,b]^{-\frac{1}{2}}.$ If possible, assume that there exists an ordered subsequence $T= k^{(x)} \bdot \ell^{(y)} \bdot m^{(z)}\bdot n^{(w)}$ of $S,$ where $0\le x \le (p^{\alpha}-1), 0\le y \le (p^{\beta}-1), 0\le z, w\le (p^{\sigma}-1)$ with $(x,y,z,w) \neq (0,0,0,0)$ such that
 $k^x \ell^y m^z n^w = 1.$ Then we have the following system of equations: 
\begin{align*}
  -x+z+w+\frac{1}{2}\times p^{\alpha-\gamma}\Big[-2y(z+w)-2(q+1)zw-(q+1)z^2-w^2\Big] &\equiv 0\ (\textrm{mod}\ p^{\alpha})\\ 
  y+(q+1)z+w & \equiv 0\ (\textrm{mod}\ p^{\beta}) \\ 
  -2y(z+w)-2(q+1)zw-(q+1)z^2-w^2&\equiv 0\ (\textrm{mod}\ p^{\sigma}).
\end{align*}
For $\sigma=1,$ we obtain the quadratic form $(q+1)z^2+2zw+w^2$ having discriminant $-4q,$ a quadratic non-residue modulo $p.$ So, we get $x=y=z=w=0,$ a contradiction. \qed
\end{itemize}

\subsection*{Proof of Corollary \ref{Cor-thm6}}
It follows from  the proof of the second part of Theorem \ref{thm6}.\qed

\subsection*{Proof of Theorem \ref{thm7}}

\no Let $G$ be one of the groups from $\{D_{2^r},SD_{2^r},Q_{2^r},M_{2^r}\}.$ Then we have $\mathsf{L}(G)=2^{r-1}+1$ by \cite[Theorem 1.6]{K}.  Let $S=y^{(2^{r-1}-1)}\bdot x$ be a sequence of length $2^{r-1},$ which has no non-trivial product-one subsequence over the group $G$. Hence, we have $2^{r-1}+1 \leq \mathsf{D}(G)$. As a result of Theorem \ref{thm2}, we have $\mathsf{D}(G)=2^{r-1}+1.$ Hence, we have the full truth value of Conjecture \ref{conj1} for these groups.\qed

\subsection*{Proof of Corollary \ref{Cor-thm7}}
It follows from the proof of Theorem \ref{thm7}. \qed

\section{Concluding remarks}
\begin{itemize}
    \item 
For every finite group $G,$ $\mathsf{E}(G)$ is defined as the least integer $k$ such that for every sequence $S=x_1 \bdot x_2 \bdot\dotsc \bdot x_k$ of length $k$, there exist $|G|$ indices $1 \le i_1 < i_2 <\cdots < i_{|G|} \le k $ such that  $\prod_{j=1}^{|G|} x_{i_{\sigma(j)}}=1$ holds for $\sigma = id,$ identity element of $\mathfrak{S}_{|G|}.$ Finding out the precise value of this invariant is interesting as the length of the required product-one ordered subsequence is restricted. For a finite abelian group $G$, from \cite{G} we know that  $\mathsf{E}(G)=\mathsf{D}(G)+|G|-1.$ One may ask whether a similar result also holds for finite non-abelian
groups. For a finite non-abelian group $G,$ there exists a sequence $T$ of length $\mathsf{D}(G)-1,$ which does not have a product-one ordered subsequence. If we consider the sequence $T$ append with the sequence $1^{(|G|-1)}$ of $G,$ then the new sequence does not have a product-one ordered subsequence of length $|G|.$ This conclude that $\mathsf{E}(G) \ge \mathsf{D}(G)+|G|-1.$ We believe the answer to the above question is affirmative and we conjecture the following:\\
\begin{conj}\label{conj2}
    For any finite group $G,$ $\mathsf{E}(G)=\mathsf{D}(G)+|G|-1.$
\end{conj}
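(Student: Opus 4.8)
The lower bound $E(G) \ge D(G) + |G| - 1$ is already recorded in the discussion above: appending the constant sequence $1^{(|G|-1)}$ to a minimal sequence of length $D(G)-1$ produces a sequence of length $D(G)+|G|-2$ with no ordered product-one subsequence of length $|G|$. So the entire content of the conjecture is the upper bound $E(G) \le D(G)+|G|-1$, and the plan is to prove that every sequence $S$ over $G$ of length $D(G)+|G|-1$ admits an ordered product-one subsequence of length exactly $|G|$. This statement simultaneously generalizes the Erd\H{o}s--Ginzburg--Ziv theorem (the case $G$ cyclic, where $D(G)=|G|$) and Gao's theorem (the abelian case), so any proof must contain these as special cases.

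The first step I would take is a reduction to a purely length-theoretic problem by carving $S$ into \emph{non-interleaving} ordered product-one blocks. Reading $S=g_1g_2\cdots g_n$ from left to right, any window of $D(G)$ consecutive terms contains a nonempty ordered product-one subsequence by the very definition of $D(G)$; extracting the one with smallest largest-index and then restarting the scan just past that index produces disjoint ordered product-one subsequences $T_1,T_2,\ldots$ whose index sets occur in blocks, so that every index of $T_i$ precedes every index of $T_{i+1}$. The crucial consequence of this non-interleaving property --- which replaces the freedom of commutativity available in the abelian case --- is that for each $j$ the union $T_1\cup\cdots\cup T_j$, read in increasing index order, is \emph{itself} an ordered product-one subsequence, of length $\ell_1+\cdots+\ell_j$ where $\ell_i=|T_i|$. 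Since each block sits inside a window of length $D(G)$ we have $\ell_i\le D(G)\le|G|$, and by choosing the number of blocks appropriately the total length exceeds $|G|$. Thus $S$ realizes ordered product-one subsequences of every length among the prefix sums of $(\ell_i)$, and the problem is reduced to forcing one of these lengths to equal exactly $|G|$.

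To hit the length $|G|$ on the nose I would imitate the inductive skeleton of Gao's theorem, using a normal subgroup $N\trianglelefteq G$ with \emph{cyclic} quotient $G/N$. Projecting $S$ to the abelian group $G/N$, I would repeatedly carve non-interleaving subsequences whose image-product in $G/N$ is trivial --- equivalently, whose $G$-product lies in $N$ --- each of length exactly $|G/N|$ (here the cyclic, abelian structure makes the product independent of order, so the Erd\H{o}s--Ginzburg--Ziv theorem supplies blocks of the prescribed length). The resulting $N$-valued products then form a sequence over $N$, and applying the conjecture inductively to $N$ lets me select exactly $|N|$ of these blocks whose combined product is the identity; their union is an ordered product-one subsequence of length $|N|\cdot|G/N|=|G|$. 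Since every nontrivial solvable group --- in particular every finite $p$-group and every nilpotent group, hence all the families treated in this paper --- possesses such an $N$, this scheme covers exactly the groups for which $D(G)$ has been computed here, with the cyclic base case being precisely Erd\H{o}s--Ginzburg--Ziv.

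The hard part will be the length bookkeeping in this induction. Guaranteeing that a sequence of length only $D(G)+|G|-1$ contains enough $G/N$-blocks to feed the inner application for $N$ requires sharp inequalities tying $D(G)$ to $D(N)$ and $D(G/N)$, and these must be established in a form compatible with the non-interleaving constraint, which is the genuinely new feature of the ordered non-abelian setting; I expect this to be where most of the work lies, and where the naive counting above must be refined. A secondary obstacle is that groups with no nontrivial cyclic quotient (perfect, in particular nonabelian simple, groups) fall outside the inductive reduction entirely, so the fully general conjecture would additionally need a polynomial-method or character-theoretic input for such $G$. I would therefore first confirm the conjecture for the explicit families of this paper --- the dicyclic, semidihedral, quaternion, and modular groups and the $2$-generated class-two $p$-groups --- where $D(G)$ and a short minimal product-one sequence are known explicitly, applying the block construction directly as a proof of concept before attempting the general induction.
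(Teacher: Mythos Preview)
The paper does not prove this statement. Conjecture~\ref{conj2} is presented in the concluding remarks precisely as an open conjecture; the paper establishes only the easy lower bound $E(G)\ge D(G)+|G|-1$ via the appending argument you describe, and explicitly leaves the upper bound as a question. There is therefore no proof in the paper for your proposal to be compared against.

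Since you are in effect sketching an attack on an open problem, let me comment on the sketch itself. The reduction you propose---extracting non-interleaving blocks whose images in a cyclic quotient $G/N$ are trivial, then applying induction inside $N$---does correctly preserve the ordered product-one property under concatenation, and this is the right substitute for commutativity. The difficulty you flag as ``bookkeeping'' is, however, more serious than a refinement: to guarantee $E(N)=D(N)+|N|-1$ non-interleaving blocks of length $|G/N|$, each extracted via Erd\H{o}s--Ginzburg--Ziv from a fresh window of $2|G/N|-1$ consecutive terms, you would need roughly $(D(N)+|N|-1)(2|G/N|-1)$ terms in $S$, and already for $G$ cyclic of order $p^2$ with $|N|=p$ this exceeds the available $D(G)+|G|-1=2p^2-1$. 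So the non-interleaving constraint costs a constant factor that the abelian proof does not pay, and without a mechanism to reuse the leftover terms between blocks the induction cannot close even in the abelian base case. Any genuine proof would have to allow the blocks to interleave in position while still controlling the ordered product of their union, and it is not clear how to do that in a non-abelian group; this is presumably why the paper records the statement as a conjecture rather than a theorem.
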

\item Analogous to the weighted Davenport constant for finite abelian groups, one can also think of defining the weighted $\mathsf{D}(G)$ in the following manner: For every finite group $G$ with $\exp(G)=n$, let $A (\ne \phi) \subseteq [n-1].$ Then $A$-weighted $\mathsf{D}(G)$, denoted by $\mathsf{D}_A(G)$, is defined to be   
the least integer $k$ such that for every sequence $S=x_1 \bdot x_2 \bdot\dotsc \bdot x_k$ of length $k$, there exists $m (\in \mathbb{N})$ indices $i_1 < i_2 <\cdots < i_{m} $ such that  $\prod_{j=1}^{m} x^{a_{i_{\sigma(j)}}}_{i_{\sigma(j)}}=1$ holds for $\sigma = id,$ identity element of $\mathfrak{S}_{m}$ and for some $a_{i_{\sigma(j)}} \in A.$ It is worthy to study the behaviour of $\mathsf{D}_A(G).$ Analogous to Dimitrov \cite{D}, one may ask what will be the upper bound for $\mathsf{D}_A(G)$ so that for $A=\{1\},$ the upper bound of $\mathsf{D}(G)$ coincides with $\mathsf{L}(G).$ In \cite{BM1} and \cite{BM2} the authors had defined the following extremal problem: For finite abelian group $G$ with $\exp(G)=n,$ $A (\neq \phi)\subseteq [n-1],$ and $\dsone_A(G)$ being the $A$-weighted Davenport constant,
\begin{eqnarray*}\label{fdG_def} f^{(\dsone)}_G(k)&:=&\min\{|A|: \emptyset \neq A\subseteq[n-1]\textrm{\ satisfies\ }{\dsone}_A(G)\le k\},\\ 
                                         &:=&\infty\textrm{\ 
                                         if\ there\ is\ no\ 
                                         such\ }A.\end{eqnarray*}

Now, if we modify the definition for a finite non-abelian group $G$ and consider $\mathsf{D}_A(G)$ instead of $\dsone_A(G)$, then it is interesting to study the above extremal invariant for $2$-generated $p$-group of nilpotency class two.
\end{itemize}
\section{Acknowledgement}
The research of the second author is supported by the PMRF Fellowship (PMRF ID: 0400422), Ministry of Education (India). The research of the third author is supported by the ANRF-Core Research Grant (File No. CRG/2023/002698).

\endgroup
 
\end{document}